\newtheorem{thm}{Theorem}
\newtheorem{prop}[thm]{Proposition}
\newtheorem{lem}[thm]{Lemma}
\newtheorem{fact}[thm]{Fact}
\newtheorem{conj}[thm]{Conjecture}
\theoremstyle{remark}
\theoremstyle{definition}
\newtheorem{defi}[thm]{Definition}
\newcommand{\col}{\kern -3pt :}
\newcommand{\C}{\mathbb C}
\newcommand{\R}{\mathbb R}
\newcommand{\Z}{\mathbb Z}
\newcommand{\Id}{\mathrm{Id}}
\newcommand{\End}{\mathrm{End}}
\newcommand{\Spin}{\mathrm{Spin}}
\newcommand{\RS}{\mathcal R_{\PSL(\C)}^{\Spin}}
\newcommand{\T}{\mathcal T}
\newcommand{\SSS}{\mathcal S^A}
\newcommand{\RR}{\mathcal R}
\newcommand{\SL}{\mathrm{SL}_2}
\newcommand{\PSL}{\mathrm{PSL}_2}
\newcommand{\Tr}{\mathrm{Tr}}
\newcommand{\db}{/\kern -4pt/}
\renewcommand{\leq}{\leqslant}
\renewcommand{\phi}{\varphi}
\renewcommand{\epsilon}{\varepsilon}
\title[Kauffman brackets, characters and triangulations]
{Kauffman brackets, character varieties\\and triangulations of surfaces}
\author{Francis Bonahon}
\address {Department
of Mathematics,  University of
Southern California, Los Angeles
CA~90089-2532, U.S.A.}
\email{fbonahon@math.usc.edu}
\author{Helen Wong}
\address{Department
of Mathematics, Carleton College, Northfield MN 55057, U.S.A.}
\email{hwong@carleton.edu}
\thanks{This research was partially supported by the grant  DMS-0604866 from the National Science Foundation, and by a mentoring grant from the Association for Women in Mathematics.}
\date{\today}
\begin{document}
\maketitle

\begin{abstract}
A Kauffman bracket on a surface is an invariant for framed links in the thickened surface, satisfying the Kauffman skein relation and multiplicative under superposition. This includes representations of the skein algebra of the surface. We show how an irreducible representation of the skein algebra usually specifies a point of the character variety of homomorphisms from the fundamental group of the surface to $\PSL(\C)$, as well as  certain weights associated to the punctures of the surface. Conversely, we sketch a proof of the fact that each point of the character variety, endowed with appropriate puncture weights, uniquely determines a Kauffman bracket. Details will appear elsewhere. 
\end{abstract}

This article closely follows the talk given by one of us at the  conference \textit{Topology and geometry in dimension three: triangulations, invariants and geometric structures}. It turns out to feature all three of the scientific themes emphasized in the title of the conference. The main object under study consists of Kauffman brackets, which are invariants of links in 3--manifolds. We will connect these to hyperbolic metrics on a thickened surface, and finally use triangulations of surfaces as a tool in our constructions. The article describes a long-term program, and focusses more on statements than proofs. Details will appear elsewhere \cite{BonWon1, BonWon2}. 

The two authors were greatly honored to be invited to this great celebration of 3--dimensional topology and of the achievements of one of its major contributors, and greatly enjoyed their participation. They are very pleased to dedicate this article to Bus Jaco, as a grateful acknowledgment of his impact on their field. 

\section{Kauffman brackets and the skein algebra}

\subsection{The classical Kauffman bracket}
For a fixed  number $A\in \C^*$, 
the classical {\emph{Kauffman bracket}} is the unique map
$$
\mathcal K \col \{ \text{framed links } K \subset S^3 \} \to \C
$$ 
that satisfies the following four properties.

\begin{enumerate}

\item \textsc{Isotopy Invariance:} If the framed links $K_1$ and $K_2$ are isotopic, then $\mathcal K(K_1)= \mathcal K(K_2)$.

\item \textsc{Skein Relation:} Let the three framed links $K_1$, $K_0$ and $K_\infty\subset S^3$ form a \emph{Kauffman triple}, in the sense that  the only place where they differ is in a small ball of $S^3$, where they are as represented in Figure~\ref{fig:skein} and where the framings are all pointing towards the reader. Then
$$
 \mathcal K(K_1) = A^{-1}\mathcal K(K_0) + A \mathcal K(K_\infty).
$$

\item\textsc{Superposition Relation:} Let $K_1\cdot K_2$ be the framed link obtained by stacking $K_2$ on top of  $K_1$, as in Figure~\ref{fig:Superposition}. Then
$$\mathcal K (K_1\cdot  K_2) = \mathcal K(K_1) \, \mathcal K(K_2).$$

\item \textsc{Non-triviality:} There exists a non-empty link $K$ such that $\mathcal K(K) \not=0$.

\end{enumerate}

\begin{figure}[htbp]
\SetLabels
( .5 * -.4 ) $K_0$ \\
( .1 * -.4 )  $K_1$\\
(  .9*  -.4) $K_\infty$ \\
\endSetLabels
\centerline{\AffixLabels{\includegraphics{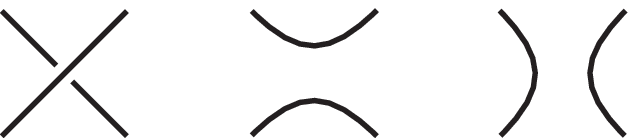}}}
\vskip 15pt
\caption{A Kauffman triple}
\label{fig:skein}
\end{figure}

\begin{figure}[htbp]
{\SetLabels
(.14 * -.2) $K_1 $ \\
(.51 * -.2) $ K_2$ \\
( .88*-.2 ) $ K_1\cdot K_2$ \\
\endSetLabels
\centerline{\AffixLabels{ \includegraphics{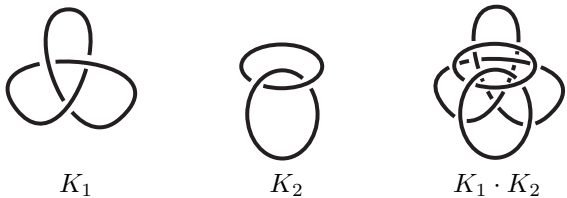}}}}
\vskip 10pt
\caption{Superposing two links}
\label{fig:Superposition}
\end{figure}

Recall that a \emph{framing} for a link $K$ in $S^3$, or more generally in any 3--dimensional manifold, is the choice at each $x\in K$ of a vector $v_x$ that depends differentiably on $x$ and is never tangent to $K$.

L.~Kauffman \cite{Kau} proved the following fundamental result.

\begin{thm}
\label{thm:Kauffman}
For every $A\in \C^*$, there exists a unique  Kauffman bracket
$$
\mathcal K \col \{ \text{framed links } K \subset S^3 \} \to \C.
$$
\vskip -\belowdisplayskip
\vskip -\baselineskip
 \qed

\end{thm}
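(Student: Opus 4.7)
The plan is to prove both uniqueness and existence by passing to planar link diagrams. Any framed link in $S^3$ is isotopic to one lying in a thin horizontal slab with framing pointing vertically upward, hence is represented by a planar diagram with blackboard framing. Moreover, two such diagrams represent isotopic framed links if and only if they differ by a finite sequence of Reidemeister moves of types II and III; type I is excluded, because it would change the framing, and this is precisely the reason for working with framed rather than unframed links.

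For uniqueness, I would apply the Skein Relation inductively at each crossing of a diagram $D$ with $n$ crossings. This expresses $\mathcal{K}(D)$ as a $\C[A^{\pm 1}]$-linear combination of values $\mathcal{K}(D_s)$, where $s$ ranges over the $2^n$ crossing-free resolutions of $D$ and each $D_s$ is a disjoint union of planar simple closed curves. Separating these curves into distinct vertical slabs and iterating the Superposition Relation then expresses $\mathcal{K}(D_s)$ as a power of $\mathcal{K}(\bigcirc)$ times $\mathcal{K}(\emptyset)$. The Non-triviality axiom, applied via the identity $K = K \cdot \emptyset$, forces $\mathcal{K}(\emptyset) = 1$, and the value $\mathcal{K}(\bigcirc) = -A^2 - A^{-2}$ is then pinned down by demanding consistency under a Reidemeister II move. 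This uniquely determines $\mathcal{K}$.

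For existence, I would turn this derivation into a definition. Given a diagram $D$ with $n$ crossings, set
$$
\mathcal{K}(D) = \sum_s A^{\sigma(s)} \bigl(-A^2 - A^{-2}\bigr)^{|s|},
$$
where $s$ ranges over all $2^n$ resolutions, $\sigma(s)$ is the signed count (weight $-1$ for each $K_0$ choice and $+1$ for each $K_\infty$ choice), and $|s|$ is the number of loops in the resulting crossing-free diagram. The Skein Relation follows immediately by grouping states according to the choice at a single crossing; the Superposition Relation holds because stacking two sublinks produces no new crossings and the loop count is additive across the two slabs, so the state sum factorizes. Non-triviality is clear in the generic case from $\mathcal{K}(\bigcirc) = -A^2 - A^{-2} \neq 0$.

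The main obstacle is showing that this state sum is invariant under Reidemeister moves of types II and III, so that it descends from diagrams to framed isotopy classes. For R2, the four resolutions of the two crossings combine so that the cross terms cancel and the remaining terms use the factor $-A^2 - A^{-2}$ to absorb an extra loop. For R3, invariance reduces to an R2-style matching of the eight resolutions on each side of the move. Both verifications are finite case analyses and form the technical heart of the original argument in \cite{Kau}.
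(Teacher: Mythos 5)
The paper does not prove Theorem~\ref{thm:Kauffman}; it simply cites Kauffman \cite{Kau} and marks the statement with $\square$. Your proposal is precisely Kauffman's original state-sum argument, so it is the ``same approach'' in the only meaningful sense here, and it is essentially correct. Two small points are worth tightening. First, Reidemeister moves~II and~III alone classify framed links up to \emph{regular} isotopy, i.e.\ framed isotopy in $\R^2\times[0,1]$; framed isotopy in $S^3$ is slightly coarser, as one may also cancel a positive kink against a negative kink (changing the Whitney rotation number while preserving writhe). This does not damage the argument because the state sum changes by $-A^{\pm 3}$ under a single R1 move, so the two factors cancel; but the extra move should be acknowledged since the theorem is phrased for $S^3$. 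Second, non-triviality is not ``clear'' when $A^4=-1$, where $-A^2-A^{-2}=0$ and hence the unknot has bracket zero; one should instead exhibit, say, the Hopf link, whose bracket $-A^4-A^{-4}=2$ is nonzero for those values of $A$, so that the axiom holds for every $A\in\C^*$ as claimed.
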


In addition, it easily follows from the skein relation and the invariance under isotopy  that the Kauffman bracket $\mathcal K(K)$ of a framed link $K$  is a Laurent polynomial in $A$ with integer coefficients. 

\subsection{Kauffman brackets on a surface} 
The theory of knots and links in $S^3$ can be described from a purely 2--dimensional point of view, by considering their projections to the plane up to Reidemeister moves. In this regard, the existence and uniqueness of the Kauffman bracket can be considered as a property of the plane, or of the disk.
It is then tempting  to extend this property to more general surfaces.

More precisely, let $S$ be a compact oriented surface, possibly with boundary. We can then consider ``pictures of knots and links'' on the surface $S$, considered up to the Reidemeister moves II and III. Such a link diagram in $S$ determines an isotopy class of framed links in the thickened surface $S\times [0,1]$, where the framing is vertical pointing upwards, namely  parallel to the $[0,1]$ factor and pointing in the direction of $1$. (Reidemeister move I is excluded as it would alter the framing.) 

The Skein Relation and the Superposition Relation have automatic extensions to this context of framed links in the thickened surface $S\times[0,1]$. More precisely, a \emph{Kauffman triple} in $S\times[0,1]$ consists of three framed links $K_1$, $K_0$, $K_\infty$ that differ only above a small disk in $S$, where they are represented by the link diagrams of  Figure~\ref{fig:skein} (with vertical framing pointing upwards). 

Also, if $K_1$ and $K_2$ are two framed links in $S\times [0,1]$, we can consider the framed links $K_1'\subset S\times [0,\frac12]$ and $K_2' \subset S \times[\frac12, 1]$ respectively obtained from $K_1$ and $K_2$ by rescaling in the $[0,1]$ direction. Then  the \emph{superposition} of $K_2$ above $K_1$ is the framed link $K_1\cdot K_2 = K_1' \cup K_2'$. This is illustrated in Figure~\ref{fig:Superposition4} in the case where the surface $S$ is a twice punctured torus. 
Note that, in contrast to the case of framed links in $S^3$, the superposition $K_2\cdot K_1$ is not always isotopic to $K_1 \cdot K_2$ in $S\times[0,1]$.

\begin{figure}[htbp]

\SetLabels
( .16*-.3 ) $ K_1$ \\
( .5* -.3) $ K_2$ \\
( .88*-.3 ) $ K_1\cdot K_2$ \\
\endSetLabels
\centerline{\AffixLabels{ \includegraphics{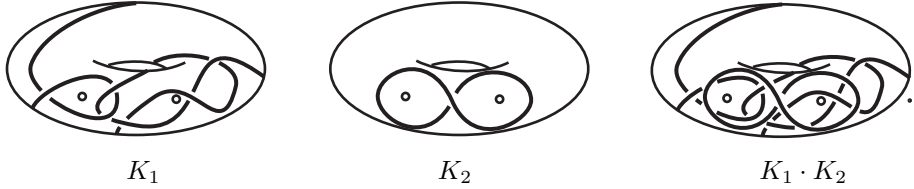}}}
\vskip 15pt
\caption{Superposition in $S\times[0,1]$}
\label{fig:Superposition4}
\end{figure}

It is now natural to introduce the following definition.

\begin{defi}
For a fixed $A\in \C^*$, a  \emph{Kauffman bracket} on the surface $S$, valued in the algebra $\mathcal A$, is a map
$$
\mathcal K \col \{ \text{framed links } K \subset S \times [0,1]\} \to \mathcal A
$$ 
satisfying the following four properties.

\begin{enumerate}

\item \textsc{Isotopy Invariance:} If the framed links $K_1$ and $K_2$ are isotopic in $S\times[0,1]$, then $\mathcal K(K_1)= \mathcal K(K_2)$.

\item \textsc{Skein Relation:} If the three framed links $K_1$, $K_0$ and $K_\infty\subset S^3$ form a Kauffman triple in $S\times [0,1]$, then
$$
 \mathcal K(K_1) = A^{-1}\mathcal K(K_0) + A \mathcal K(K_\infty).
$$

\item \textsc{Superposition Relation:} If $K_1\cdot K_2$ is the framed link obtained by stacking $K_2$ above $K_1$ in $S\times [0,1]$, then 
$$\mathcal K (K_1\cdot  K_2) = \mathcal K(K_1) \, \mathcal K(K_2).$$

\item \textsc{Non-triviality:} There exists a non-empty framed link $K\subset S \times [0,1] $ such that $\mathcal K(K) \not=0$.  

\end{enumerate}
\end{defi}

In view of Theorem~\ref{thm:Kauffman}, it is now natural to aim for the following

\theoremstyle{theorem}
\newtheorem*{MainGoal}{Main Goal}
\begin{MainGoal}
Classify all possible Kauffman brackets for the surface $S$.
\end{MainGoal}

This goal will turn out to be too optimistic at this point, but at least we will construct many examples of interesting Kauffman brackets.

\subsection{The skein algebra}

We first rephrase the definition of Kauffman brackets, following \cite{Prz, Tur1, Bull1, BFK1, PrzS, Tur2, BFK3}. 

\begin{defi}
The \emph{skein algebra} $\SSS(S)$ of the surface $S$ is defined by considering the vector space freely generated (over $\C$) by the set of all framed links in $S\times [0,1]$, and then by taking the quotient of this vector space by the subspace generated by elements of the following two types:
\begin{itemize}
\item[] $K_1-K_2$ for every pair of isotopic framed links $K_1$, $K_2\subset S\times [0,1]$;
\item[] $K_1 - A^{-1} K_0 - A K_\infty$ for every Kauffman triple $K_1$, $K_0$, $K_\infty \subset S\times[0,1]$. 
\end{itemize}
The multiplication of the algebra $\SSS(S)$ is  induced by the superposition operation, with  $[K_1][K_2] = [K_1\cdot K_2]$ for any two  framed links $K_1$, $K_2\subset S\times [0,1]$.
\end{defi}

A \emph{skein} is a class $[K] \in \SSS(S)$ represented by a framed link $K\subset S \times [0,1]$. 
Note that the empty skein $[\varnothing] $ is a unit in the skein algebra $\SSS(S)$. 

The definition of the skein algebra is specially designed so that the following holds.

\begin{fact}
A Kauffman bracket valued in the algebra $\mathcal A$ is the same thing as an algebra homomorphism
$\mathcal K \col \SSS(S) \to \mathcal A  $ that is \emph{non-trivial}, in the sense that there exists a non-empty skein $[K]\in \SSS(S)$ with $\mathcal K([K]) \not=0$.  \qed
\end{fact}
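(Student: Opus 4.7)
The plan is to verify the equivalence by exploiting the universal property built into the definition of $\SSS(S)$ as a quotient of the free vector space on framed links. Starting from a Kauffman bracket $\mathcal K$ valued in $\mathcal A$, I would first extend it $\C$-linearly to the free $\C$-vector space $V$ generated by all framed links in $S\times[0,1]$. The Isotopy Invariance and Skein Relation axioms translate directly into the statement that this linear extension vanishes on the two families of generators of the subspace by which we quotient, so the extended map descends to a well-defined linear map $\mathcal K \col \SSS(S)\to\mathcal A$.

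Next, I would upgrade this linear map to an algebra homomorphism. Multiplicativity on classes of single framed links is exactly the Superposition Relation, combined with the defining rule $[K_1][K_2]=[K_1\cdot K_2]$ in $\SSS(S)$, and it extends bilinearly to all skeins. To get unitality, I would apply the Superposition Relation to the identity $\varnothing\cdot K = K$, which yields $\mathcal K(K)=\mathcal K(\varnothing)\mathcal K(K)$; choosing a framed link $K$ with $\mathcal K(K)\neq 0$, which is possible by Non-triviality, forces $\mathcal K([\varnothing])=1_{\mathcal A}$. The Non-triviality axiom for the original bracket then gives the non-triviality of the resulting homomorphism in the sense demanded by the Fact.

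In the opposite direction, given a non-trivial algebra homomorphism $\mathcal K\col\SSS(S)\to\mathcal A$, I would precompose with the tautological map sending a framed link $K\subset S\times[0,1]$ to its skein class $[K]\in\SSS(S)$. The four axioms of a Kauffman bracket then follow for free: Isotopy Invariance and the Skein Relation because they hold at the level of skeins by construction of $\SSS(S)$, the Superposition Relation because $[K_1\cdot K_2]=[K_1][K_2]$ and $\mathcal K$ is multiplicative, and Non-triviality because it is exactly the hypothesis on $\mathcal K$. The two assignments are manifestly mutual inverses.

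There is essentially no serious obstacle here: the statement is a routine unwinding of the universal property of a quotient, and the skein algebra has been engineered so that the equivalence holds almost tautologically. The one subtle point worth flagging is the passage from the Superposition Relation to unitality of the associated algebra homomorphism, which genuinely requires the Non-triviality hypothesis; without it, the zero map would trivially satisfy the first three axioms of a Kauffman bracket while failing to correspond to a unital algebra homomorphism.
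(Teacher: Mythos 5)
Your argument is the correct one, and it is essentially the only one: the paper leaves this Fact with a \texttt{\textbackslash qed} precisely because the quotient in the definition of $\SSS(S)$ is manufactured so that Isotopy Invariance and the Skein Relation correspond to the relations killed, while the Superposition Relation corresponds to the multiplication. Your unwinding of the universal property of the quotient is exactly the reasoning the paper is appealing to, and the verification in both directions is routine as you say.

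One caveat: the step where you claim that Non-triviality forces $\mathcal K([\varnothing])=1_{\mathcal A}$ is both unnecessary for the Fact and not actually valid in general. From $\mathcal K(\varnothing)\mathcal K(K)=\mathcal K(K)$ with $\mathcal K(K)\not=0$ one cannot conclude $\mathcal K(\varnothing)=1_{\mathcal A}$ in an arbitrary algebra; for instance in $\End(V)$ a nontrivial idempotent $p$ satisfies $p\,y=y$ for every $y$ whose image is contained in the range of $p$. What one does get, from $\varnothing\cdot\varnothing=\varnothing$ and $\varnothing\cdot K \simeq K\cdot \varnothing \simeq K$, is that $\mathcal K(\varnothing)$ is an idempotent acting as a two-sided identity on the image of $\mathcal K$ --- a unit for the subalgebra generated by that image, but not necessarily the unit of $\mathcal A$. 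This is harmless here because the Fact as stated asks only for an algebra homomorphism, not a unital one; indeed the paper's remark immediately after the Fact, that any idempotent $\iota\in\mathcal A$ yields a trivial homomorphism with $[\varnothing]\mapsto\iota$, shows that non-unital homomorphisms are deliberately in scope. You should simply delete the unitality paragraph; the remainder of your argument proves the Fact as stated.
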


Note that, when $\mathcal A$ contains an idempotent element  $\iota \in \mathcal A$ (for instance a unit element),   the skein algebra always admits a \emph{trivial} algebra  homomorphism $\SSS(S) \to \mathcal A$ sending the unit $[\varnothing]$ to $\iota$ and every  other skein $[K]\not= [\varnothing] \in \SSS(S)$ to~$0$.

Our Main Goal now  becomes:

\begin{MainGoal}
Classify all algebra homomorphisms $\mathcal K \col \SSS(S) \to \mathcal A  $.
\end{MainGoal}

One problem appears right away: there really exists a great variety of algebras $\mathcal A$. For instance, we could take $\mathcal A$ equal to the skein algebra $\SSS(S)$ and $\mathcal K$ equal to the identity map (and call it the universal Kauffman bracket!), but this would clearly have little impact on our understanding of links in $S\times [0,1]$. 

It therefore makes sense to restrict attention to a specific class of algebras, namely to algebras $\mathcal A = \End(V)$ of linear endomorphisms of a finite dimensional vector space $V$. Choosing a basis for $V$, such an algebra is just an algebra of square matrices of a given size, and consequently very explicit. 

Thus, we henceforth limit our investigation to algebra homomorphisms $\mathcal K \col \SSS(S) \to \End(V)$, where $V$ is a finite dimensional vector space. Such an algebra homomorphism is a \emph{representation} of the algebra $\SSS(S)$, and we will reflect this change in emphasis by switching from the letter $\mathcal K$ (for ``Kauffman bracket'') to $\rho$ (for ``representation'') in the notation. We will also restrict our investigation to \emph{irreducible} representations, as building blocks of more general representations. 

\theoremstyle{theorem}
\newtheorem*{RevisedGoal}{Revised Goal}
\begin{RevisedGoal}
Classify all irreducible representations $\rho\col \SSS(S) \to \End(V)$ of the skein algebra $\SSS(S)$. 
\end{RevisedGoal}

This goal now becomes much more realistic. Although we are not able to completely attain it at this point, we will make progress in this direction. We begin by discussing in \S\ref{sect:A=pm1} and \S\ref{sect:SmallSurfaces} a few simple cases where the program has been completed. In \S\S \ref{sect:Invariants}--\ref{sect:ConjClass} we describe a conjectural classification of irreducible representations  $\rho\col \SSS(S) \to \End(V)$, and state an existence result in Theorem~\ref{thm:MainThm}. We conclude with a few indications on its proof in \S \ref{sect:Triang}. 

\section{The case where $A=\pm1$}
\label{sect:A=pm1}

Surprisingly enough, when $A=\pm1$, Kauffman brackets are related to group homomorphisms from the fundamental group $\pi_1(S)$ to $\SL(\C)$ or $\PSL(\C)$. 

It is an immediate consequence of the Skein Relation that the skein algebra $\mathcal S^{\pm1}(S)$ is commutative. Elementary linear algebra then shows that every representation $\mathcal S^{\pm1}(S) \to \End(V)$ splits as a direct sum of irreducible representations, and that every irreducible representation has dimension~1. 
As a consequence, every representation is isomorphic to a direct sum of representations $\rho \col \mathcal S^{\pm1}(S) \to \End(\C) = \C$.

\subsection{The case where $A=-1$}
\label{sect:A=-1}
Consider a group homomorphism $r\col \pi_1(S) \to \SL(\C)$. A closed curve $K$ in $S \times [0,1]$ determines a conjugacy class in $\pi_1(S)$, so that the element $r(K) \in \SL(\C)$ is well-defined up to conjugation. In particular, the trace $\Tr\,r(K) \in \C$ is uniquely determined. 

The corresponding trace map $K \mapsto \Tr\,r(K)$  depends only on the class of $r$ in the \emph{character variety} 
$$
\RR_{\SL(\C)} = \{\text{group homomorphisms } r \col \pi_1(S) \to \SL(\C) \}\db \SL(\C)
$$
where $\SL(\C)$ acts on such group homomorphisms by conjugation. Here the double bar indicates that the quotient has to be taken in the sense of geometric invariant theory \cite{Mum}. In practice, because the trace function $\Tr$ generates all conjugation invariant functions on $\SL(\C)$, two group homomorphisms $r$, $r'\col \pi_1(S) \to \SL(\C)$ represent the same element of the character variety $\RR_{\SL(\R)}$ if and only if $\Tr \,r'(K) = \Tr \,r(K)$ for every closed curve $K\subset S\times [0,1]$. In the generic case where $r$ is irreducible, namely where its image in $\SL(\C)$ leaves no line in $\C^2$ invariant, this is equivalent to the property that $r$ and $r'$ are conjugate by an element of $\SL(\C)$. See for instance \cite{CulS}.

The trace function provides a one-to-one correspondence between Kauffman brackets $\mathcal S^{-1}(S) \to \C$ and elements of the character variety $\RR_{\SL(\C)}$. 

\begin{thm}[\cite{Bull1, PrzS, Vogt, Frick, FrickKlein, Helling, BruHil, Gold2, Luo}]
\label{thm:A=-1}
A group homomorphism $r\col \pi_1(S) \to \SL(\C)$ defines a non-trivial homomorphism
$$
\rho_r\col  \mathcal S^{-1}(S)  \to \C 
$$
by the property that
$$
\rho_r([K]) = - \Tr\,r(K)
$$
for every framed knot $K\subset S\times[0,1]$.

Conversely, every  non-trivial homomorphism $ \mathcal S^{-1}(\Sigma) \to \C$ is associated in this way to a homomorphism $r\col \pi_1(S) \to \SL(\C)$, and the class of $r$ in $\RR_{\SL(\C)}$ is unique.
\end{thm}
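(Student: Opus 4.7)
\emph{Proof plan.} My plan is to address the two directions of the correspondence separately, using as the crucial technical input the Bullock--Przytycki--Sikora identification of $\mathcal S^{-1}(S)$ modulo nilpotents with the coordinate ring of $\RR_{\SL(\C)}$.

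\emph{Forward direction.} Given $r \col \pi_1(S) \to \SL(\C)$, I would first extend $\rho_r$ from framed knots to framed links by the product formula
\[
\rho_r(L) = \prod_{i=1}^n \bigl(-\Tr r(K_i)\bigr)
\]
for a link $L$ with components $K_1, \dots, K_n$; this formula is forced by the Superposition Relation once one observes that each framed link is framed-isotopic to a superposition of its components. Each factor $\Tr r(K_i)$ depends only on the free homotopy class of $K_i$ in $S \times [0,1]$, and at $A = -1$ the Reidemeister~I correction factor $-A^{\mp 3}$ collapses to $+1$, so the formula will be framed-isotopy invariant. The Superposition Relation will be automatic from the product, and $\rho_r([\varnothing]) = 1 \ne 0$ supplies non-triviality. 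The only substantive check is the Skein Relation: in a Kauffman triple $(K_1, K_0, K_\infty)$, one of $K_0$, $K_\infty$ merges two components of $K_1$ into one while the other splits one component into two (or \emph{vice versa}), and in either configuration the required identity
\[
\rho_r(K_1) = -\rho_r(K_0) - \rho_r(K_\infty)
\]
will reduce directly to the $\SL(\C)$ trace identity $\Tr(XY) + \Tr(XY^{-1}) = \Tr(X) \Tr(Y)$, the built-in sign in $\rho_r = -\Tr r$ being designed precisely to absorb the coefficients $A^{-1} = A = -1$.

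\emph{Converse direction.} Given a non-trivial homomorphism $\rho \col \mathcal S^{-1}(S) \to \C$, I would invoke the theorem of Bullock and Przytycki--Sikora, which states that the natural map
\[
\Phi \col \mathcal S^{-1}(S) \to \C\bigl[\RR_{\SL(\C)}\bigr], \qquad [K] \mapsto \bigl(r \mapsto -\Tr r(K)\bigr),
\]
is a surjective $\C$-algebra homomorphism whose kernel equals the nilradical $\sqrt 0$. Since $\C$ is reduced, $\rho$ will automatically vanish on $\sqrt 0$ and descend to a homomorphism $\bar\rho \col \C[\RR_{\SL(\C)}] \to \C$. By the Nullstellensatz this $\bar\rho$ is evaluation at a unique $\C$-point of $\RR_{\SL(\C)}$, namely a GIT equivalence class $[r]$ of representations $r \col \pi_1(S) \to \SL(\C)$; by construction $\rho = \rho_r$, and uniqueness of $[r]$ is automatic since distinct closed points of $\RR_{\SL(\C)}$ determine distinct $\C$-valued evaluations.

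\emph{Main obstacle.} I expect the substantive technical input to be the Bullock--Przytycki--Sikora identification $\mathcal S^{-1}(S)/\sqrt 0 \cong \C[\RR_{\SL(\C)}]$, and especially the equality $\ker \Phi = \sqrt 0$. The surjectivity of $\Phi$ is essentially Procesi's theorem that all $\SL(\C)$-conjugation-invariant regular functions on $\mathrm{Hom}(\pi_1(S), \SL(\C))$ are polynomials in the word-traces $r \mapsto \Tr r(\gamma)$. The harder half --- and in my view the main difficulty --- is to show that every universal polynomial identity among these word-traces is already a consequence of the Kauffman skein relation at $A = -1$ up to nilpotents, which requires a careful analysis either via the classical Fricke presentations or, following Bullock, via an induction that resolves crossings of arbitrary link diagrams. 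A secondary, purely bookkeeping obstacle is the sign and orientation analysis in the forward direction's Skein Relation check.
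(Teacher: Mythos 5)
Your forward direction is essentially the paper's: define $\rho_r$ on links by the product formula $(-1)^n\prod_i \Tr r(K_i)$ and check the Kauffman relation at $A=-1$ reduces to $\Tr(XY)+\Tr(XY^{-1})=\Tr(X)\Tr(Y)$, which is exactly the observation the paper attributes to Bullock. One small correction: the product formula is \emph{not} ``forced'' by the Superposition Relation on the grounds that a framed link is framed-isotopic to a superposition of its components --- in general it is not (push two linked components to different heights and they cross, so the superposition is a different link). Rather, one simply \emph{defines} $\rho_r$ by the product formula and then verifies the three axioms, which is what you in fact do; so this is a misstatement of motivation, not a gap.

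For the converse you and the paper take genuinely different routes. The paper frames the converse as the classical Vogt--Fricke problem of characterizing which functions on a group $G$ arise as $\gamma\mapsto\Tr r(\gamma)$ for some $r\col G\to\SL(\C)$, and defers to that literature. You instead invoke the Bullock/Przytycki--Sikora theorem that the trace map $\Phi\col \mathcal S^{-1}(S)\to\C[\RR_{\SL(\C)}]$ is surjective with kernel the nilradical, note that any homomorphism to the reduced ring $\C$ kills $\sqrt 0$ (and, by the non-triviality hypothesis, is unital), and then apply the Nullstellensatz to read off a unique closed point of the GIT quotient. This is a cleaner logical packaging and makes the uniqueness in $\RR_{\SL(\C)}$ immediate, at the cost of treating the Bullock/Przytycki--Sikora identification as a black box --- and the proof of that identification is in turn ultimately a repackaging of the Vogt--Fricke analysis, so the content is not really independent. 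Both routes are correct, and both are among the paper's own citations for the theorem; your write-up is a valid and somewhat more streamlined derivation than the historically phrased sketch the paper gives.
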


The first half of the statement is a crucial observation of D. Bullock \cite{Bull1} that, if we associate to each link $K\subset S\times[0,1]$ with components $K_1$, \dots, $K_n$ the number
$$
\mathcal K_r(K) = (-1)^n \prod_{i=1}^n \Tr\,r(K_i) \in \C,
$$
then $\mathcal K_r$ satisfies the skein relation with $A=-1$. This property is an easy consequence of the classical trace formula in $\SL$, which states that
$$
\Tr(MN) + \Tr(MN^{-1}) = \Tr(M) \Tr(N)
$$
for every $M$, $N\in \SL(R)$
(for an arbitrary commutative unit ring $R$). 

The converse statement, which can be traced back to Vogt and Fricke \cite{Vogt, Frick}, amounts to characterizing which functions on a group $G$ can be realized as the trace function of a homomorphism $G\to \SL(\C)$. See also \cite{Helling, BruHil, PrzS, Luo, Gold2}.

\subsection{The case where $A=+1$}
\label{sect:A=+1}

One can go from $A=-1$ to $A=+1$ by a construction of J.~Barrett \cite{Barr}, which associates an isomorphism $\SSS(S) \to \mathcal S^{-A}(S)$ to each spin structure on $S$. See \cite[\S 2]{PrzS} for a proof that Barrett's linear isomorphism is in fact an algebra isomorphism. 

Let $\Spin(S)$ denote the space of isotopy classes of spin structures on $S$. Recall that the difference between two elements of $\Spin(S)$ is measured by an obstruction in $H^1(S;\Z_2)$. As a consequence, $\Spin(S)\cong H^1(S;\Z_2)$ once we have chosen a base spin structure, and $H^1(S;\Z_2)$ acts freely and transitively on $\Spin(S)$. 

The cohomology group $H^1(S;\Z_2)$ also acts on $\RR_{\SL(\C)}(S)$ by the property that, if $\alpha \in H^1(S;\Z_2)$ and $r\in \RR_{\SL(\C)}(S)$, 
$$
\alpha r(\gamma) = (-1)^{\alpha(\gamma)} r(\gamma) \in \SL(\C)
$$
for every $\gamma \in \pi_1(S)$. 
The quotient space $\RR_{\PSL(\C)}/H^1(S;\Z_2)$ is the subset $\RR_{\PSL(\C)}^0$ of 
$$
\RR_{\PSL(\C)} = \{\text{group homomorphisms } r \col \pi_1(S) \to \PSL(\C) \}\db \PSL(\C)
$$
consisting of those homomorphisms $r \col \pi_1(S) \to \PSL(\C) $ that admit a lift $\pi_1(S) \to \SL(\C) $. Note that $\RR_{\PSL(\C)}^0$ is equal to $\RR_{\PSL(\C)}$ when $\partial S \not =\varnothing$, and is one of the two components of $\RR_{\PSL(\C)}$ when $S$ is closed \cite{Gold1}. 

We now consider the twisted product
\begin{align*}
\RS(S) &= \RR_{\PSL(\C)}^0(S) \widetilde \times \Spin(S) \\
&=\left(  \RR_{\SL(\C)}(S) \times \Spin(S) \right) / H^1(S; \Z_2)
\end{align*}

An element of $r\in \RS(S)$ associates a well-defined trace to each \emph{framed} knot $K \subset S\times[0,1]$. Indeed, if we represent $r$ by a pair $(\widehat r, \sigma)$ consisting of a group homomorphism $\widehat r\col \pi_1(S) \to \SL(\C)$ and of a spin structure $\sigma \in \Spin$, we can consider the trace
$$
\rho_r(K) = - (-1)^{\sigma(K)} \Tr\, \widehat r(K),
$$
where $\sigma(K) \in \Z_2$ is the monodromy of the framing of $K$ with respect to the spin structure on $S\times[0,1]$ defined by $\sigma$. This clearly depends only on the class $r$ of $(\widehat r, \sigma)$ in $\RS(S)$. 

The isomorphism $\mathcal S^{-1}(S) \to \mathcal S^1(S)$ associated to $\sigma$  by Barrett \cite{Barr} similarly involves a factor $(-1)^{\sigma(K)}$. Combining \cite{Barr} with Theorem~\ref{thm:A=-1} immediately gives:

\begin{thm}
\label{thm:A=1}
An element $r\in \RS(S)$ determines a non-trivial algebra homomorphism
$$
\rho_r\col \mathcal S^1(S) \to \C
$$ 
associating to a connected skein $[K]\in \mathcal S^1(S)$ the trace $\rho_r(K)$ defined above.

Conversely, every non-trivial homomorphism $\rho_r\col \mathcal S^1(S) \to \C$ is associated in this way to an $r\in \RS(S)$. \qed
\end{thm}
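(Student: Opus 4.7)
The plan is to deduce Theorem~\ref{thm:A=1} directly from Theorem~\ref{thm:A=-1} by composing with Barrett's algebra isomorphism $\Phi_\sigma \col \mathcal S^{-1}(S) \to \mathcal S^1(S)$ associated to each spin structure $\sigma \in \Spin(S)$. The essential property to use is that, on a framed link $[K]$, the map $\Phi_\sigma$ is given by multiplication by $(-1)^{\sigma(K)}$, where $\sigma(K)$ is the monodromy of the framing with respect to the spin structure on $S\times[0,1]$ induced by $\sigma$.

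For the existence half, given a representative $(\widehat r,\sigma)$ of $r \in \RS(S)$, I would set $\rho_r := \rho_{\widehat r} \circ \Phi_\sigma^{-1}$, where $\rho_{\widehat r}\col\mathcal S^{-1}(S)\to\C$ is the non-trivial homomorphism provided by Theorem~\ref{thm:A=-1}. Since $\Phi_\sigma^{-1}$ is an algebra isomorphism and $\rho_{\widehat r}$ is a non-trivial algebra homomorphism, $\rho_r$ is one as well. Direct evaluation on a framed knot $K$ yields the stated formula $\rho_r([K]) = -(-1)^{\sigma(K)}\Tr\,\widehat r(K)$. The key consistency check is that $\rho_r$ depends only on the class of $r$ in $\RS(S)$: if $(\widehat r',\sigma') = \alpha\cdot(\widehat r,\sigma)$ for some $\alpha\in H^1(S;\Z_2)$, then $\Tr\,\widehat r'(K) = (-1)^{\alpha(K)}\Tr\,\widehat r(K)$ while $(-1)^{\sigma'(K)} = (-1)^{\sigma(K)+\alpha(K)}$, so the two $\alpha$-dependent signs cancel. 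This cancellation is precisely why the twisted product $\RS(S)$ is the correct parameter space.

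For the converse, given a non-trivial homomorphism $\rho\col\mathcal S^1(S)\to\C$, I would fix an arbitrary spin structure $\sigma\in\Spin(S)$ and apply Theorem~\ref{thm:A=-1} to the non-trivial homomorphism $\rho\circ\Phi_\sigma\col\mathcal S^{-1}(S)\to\C$, obtaining a group homomorphism $\widehat r\col\pi_1(S)\to\SL(\C)$ whose class in $\RR_{\SL(\C)}$ is uniquely determined. Then $(\widehat r,\sigma)$ represents an element of $\RS(S)$ that induces $\rho$. For uniqueness of the class in $\RS(S)$, if another pair $(\widehat r',\sigma')$ induces the same $\rho$ and $\sigma' = \sigma+\alpha$ with $\alpha\in H^1(S;\Z_2)$, then comparing the two expressions for $\rho([K])$ forces $\Tr\,\widehat r'(K) = (-1)^{\alpha(K)}\Tr\,\widehat r(K)$ for every closed curve $K$, and the $\SL(\C)$ character theory recalled in \S\ref{sect:A=-1} gives $\widehat r' = \alpha\cdot\widehat r$ up to $\SL(\C)$-conjugation. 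Thus $(\widehat r',\sigma')$ and $(\widehat r,\sigma)$ lie in the same $H^1(S;\Z_2)$-orbit.

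The main obstacle in this scheme is absorbed into Barrett's theorem \cite{Barr} together with its algebraic upgrade in \cite[\S 2]{PrzS}: one must verify that the sign $(-1)^{\sigma(K)}$ extends from framed knots to framed links in a way that intertwines the Kauffman skein relation at $A$ with the one at $-A$, and is multiplicative under the superposition product. Once Barrett's isomorphism is available, the remainder of the proof is essentially formal, reducing to careful bookkeeping with the $H^1(S;\Z_2)$-actions on $\RR_{\SL(\C)}(S)$ and $\Spin(S)$ and the compatibility of these actions with the trace formula.
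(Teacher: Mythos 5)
Your approach is exactly the one the paper takes: it states that Theorem~\ref{thm:A=1} follows immediately by combining Barrett's spin-structure isomorphism $\mathcal S^{\pm1}(S)\cong\mathcal S^{\mp1}(S)$ (made an algebra isomorphism by \cite[\S 2]{PrzS}) with Theorem~\ref{thm:A=-1}. Your write-up simply spells out the well-definedness under the simultaneous $H^1(S;\Z_2)$-actions on $\RR_{\SL(\C)}$ and $\Spin(S)$, and the converse and uniqueness bookkeeping, which is the content the paper compresses into ``immediately gives.''
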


Note that the spaces $\RS(S)$ and $\RR_{\SL(\C)}$ are very similar, in the sense that they both are coverings of the component $\RR_{\PSL(\C)}^0$ of the character variety $\RR_{\PSL(\C)}$, with fiber $\Spin(S)\cong H^1(S;\Z_2)$.

We will refer to the elements of $\RS(S)$ as \emph{spinned homomorphisms} from $\pi_1(S)$ to $\PSL(\C)$, as they consist of a group homomorphism $r\col \pi_1(S) \to \PSL(\C)$ together with some additional spin information specifying how to lift $r$ to $\SL(\C)$. 

Hyperbolic geometers are actually quite familiar with spinned homomorphisms. Indeed, suppose that we are given a hyperbolic metric on the thickened surface $S\times (0,1)$, not necessarily complete. The holonomy of this metric gives a homomorphism $r\col \pi_1(S) \to \PSL(\C)$. What is less well-known is that this $r$ comes with additional spin information. More precisely, the choice of a spin structure $\sigma\in \Spin(S)$ specifies a preferred lift of $r$ to $\widehat r \col \pi_1(S) \to \SL(\C)$, and a different choice for the spin structure $\sigma$ leaves the class of $(\widehat r, \sigma)$ in $\RS(S)$ unchanged; see for instance \cite[Proposition~10]{BonWon1}. This proves:

\begin{prop}
A hyperbolic metric on $S\times(0,1)$ uniquely determines a spinned homomorphism $r\in \RS(S)$. \qed
\end{prop}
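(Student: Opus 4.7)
The plan is to construct the spinned homomorphism from the given hyperbolic metric in three steps: extract a $\PSL(\C)$-holonomy, promote it to an $\SL(\C)$-lift by means of an auxiliary spin structure on $S$, and then check that the resulting class in $\RS(S)$ does not depend on this auxiliary choice. After picking a basepoint $x_0\in S\times(0,1)$ and a reference orthonormal frame at $x_0$, the hyperbolic metric has a developing map $D\colon \widetilde{S\times(0,1)}\to \HH^3$ equivariant under a holonomy homomorphism $r\colon \pi_1\bigl(S\times(0,1)\bigr)=\pi_1(S)\to \Isom(\HH^3)=\PSL(\C)$, and the class of $r$ in $\RR_{\PSL(\C)}^0(S)$ is independent of these auxiliary choices.

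Second, to produce an $\SL(\C)$-lift I would use the fact that the oriented orthonormal frame bundle of $\HH^3$ is canonically identified with $\PSL(\C)$, with the group-theoretic covering $\SL(\C)\to \PSL(\C)$ serving as its spin double cover. Given a spin structure $\sigma\in \Spin(S)$, thickening by the trivial spin structure on $(0,1)$ yields a spin structure $\widetilde\sigma$ on $S\times(0,1)$, namely a double cover of the oriented orthonormal frame bundle. Pulling this double cover back to the universal cover via the developing map and comparing it with $\SL(\C)\to \PSL(\C)$ produces a canonical lift $\widehat r\colon \pi_1(S)\to \SL(\C)$ of $r$.

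Third, I would verify that the pair $(\widehat r,\sigma)$ defines a well-defined element of $\RS(S)=\bigl(\RR_{\SL(\C)}(S)\times \Spin(S)\bigr)/H^1(S;\Z_2)$. If $\sigma$ is replaced by $\alpha\cdot \sigma$ for some $\alpha\in H^1(S;\Z_2)$, the two spin structures differ by the cocycle $\alpha$, so the corresponding lifts satisfy $\widehat r'(\gamma)=(-1)^{\alpha(\gamma)}\widehat r(\gamma)$ for every $\gamma\in \pi_1(S)$, because the monodromies of the two spin covers of the frame bundle differ precisely by $\alpha$. This is exactly the $H^1(S;\Z_2)$-action defining $\RS(S)$, so $(\widehat r', \alpha\cdot \sigma)$ and $(\widehat r, \sigma)$ represent the same element of $\RS(S)$, yielding both existence and uniqueness of the associated spinned homomorphism.

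The hard part will be making precise the canonical identification between the spin cover of the oriented orthonormal frame bundle of $\HH^3$ and the group-theoretic cover $\SL(\C)\to \PSL(\C)$, and then transferring this identification to $S\times(0,1)$ via the developing map in a way compatible with the deck-transformation action of $\pi_1(S)$. Once this geometric bookkeeping is carried out, as in \cite[Proposition~10]{BonWon1}, the rest of the argument is a matter of unwinding definitions.
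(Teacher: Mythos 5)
Your proposal is correct and follows essentially the same route the paper sketches in the paragraph preceding the proposition: obtain the $\PSL(\C)$-holonomy from the developing map, use a spin structure on $S$ (thickened to $S\times(0,1)$) to select an $\SL(\C)$-lift via the spin double cover of the oriented orthonormal frame bundle, and check that changing $\sigma$ by $\alpha\in H^1(S;\Z_2)$ twists the lift by the factor $(-1)^{\alpha(\gamma)}$, which is exactly the $H^1(S;\Z_2)$-action quotiented out in $\RS(S)$. The paper likewise defers the technical verification to \cite[Proposition~10]{BonWon1}, the same source you cite.
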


Theorems~\ref{thm:A=-1} and \ref{thm:A=1} enable one to consider the skein algebra $\SSS(S)$ as a quantization of the character varieties $\RR_{\SL(\C)}(S)$ and $\RS(S)$.  See \cite{Tur1, BFK1, BFK3}.

\section{Small surfaces}
\label{sect:SmallSurfaces}

We now return to our Revised Goal for arbitrary values of $A$. 
The irreducible representations of the skein algebra $\SSS(S)$ have been completely classified for a few surfaces with relatively simple topological type. 

\subsection{The sphere with $\leq 3$ punctures} When $S$ is a sphere with at most three punctures, the skein algebra $\SSS(S)$  is commutative for all values of $A$. This is because any two simple closed curves on $S$ are disjoint after isotopy. By commutativity,  every irreducible representation of $\SSS(S)\to \End(V)$ then has dimension 1, so that canonically  $\End(V)\cong \C$. 

 An immediate extension of Kauffman's uniqueness result for the Kauffman bracket (Theorem~\ref{thm:Kauffman}) then gives the following result. See for instance \cite{Lick}. 

\begin{thm} 
\label{thm:PunctSph}
Let $S$ be a sphere with at most three punctures, the skein algebra $\SSS(S)$ is commutative and:
\begin{enumerate}
\item  If $S$ is the sphere or the disk, then $\SSS(\C) \cong \C$; as a consequence, it admits a unique irreducible representation.
\item If $S$ is the annulus, then $\SSS(S) \cong \C[X]$ where $X$ is represented by a simple closed curve going around the annulus, and two irreducible representations $\SSS(S)\to \End(V)\cong \C$ are isomorphic if and only if they assign the same number  $x\in \C$ to the generator $X$. 
\item If $S$ is the three-puncture sphere, then $\SSS(S) \cong \C[X,Y,Z]$ where the generators $X$, $Y$, $Z$ are represented by simple closed curves parallel to the boundary components; as a consequence,  two irreducible representations $\SSS(S)\to \End(V)\cong \C$ are isomorphic if and only if they assign the same numbers  $x$, $y$, $z\in \C$ to the generators $X$, $Y$, $Z$. \qed
\end{enumerate}
\end{thm}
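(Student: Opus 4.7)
The plan is to establish commutativity of $\SSS(S)$ and then identify it explicitly as a polynomial algebra, from which the classification of irreducible representations follows immediately by linear algebra.

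For commutativity, the decisive observation is that on each of the four surfaces under consideration, any two simple closed curves admit disjoint representatives in their isotopy classes; for the three-puncture sphere this uses the classical fact that every essential simple closed curve on a pair of pants is isotopic to one of the three boundary components. The Skein Relation expresses any link diagram with a crossing as a combination of diagrams with fewer crossings, so $\SSS(S)$ is spanned as a $\C$-vector space by (vertically framed) isotopy classes of simple multicurves, and it therefore suffices to verify that two disjoint simple closed curves $\alpha$, $\beta\subset S$ commute under superposition. The superpositions $\alpha\cdot\beta$ and $\beta\cdot\alpha$ differ only in the heights of the components in $S\times[0,1]$, and since $\alpha$ and $\beta$ occupy disjoint cylinders, a vertical level interchange provides an isotopy between them. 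Hence $\SSS(S)$ is commutative, and Schur's lemma forces every irreducible representation $\rho\col\SSS(S)\to\End(V)$ to have $\dim V=1$, so we may canonically identify $\End(V)\cong\C$.

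The second step is to compute the algebra structure using Przytycki's basis theorem: for any oriented surface, the skein algebra has a $\C$-basis consisting of isotopy classes of simple multicurves (with vertical framing) that have no contractible component. Generation follows from a Kauffman-style induction, in which each crossing is resolved via the Skein Relation and each contractible loop is removed at the cost of a factor $-A^2-A^{-2}$; the harder ingredient is the linear independence half, customarily proved by a state-sum argument that separates distinct multicurves. One then enumerates the essential simple closed curves in each case. A sphere or a disk admits none, so the only multicurve is $\varnothing$ and $\SSS(S)\cong\C$. An annulus admits a single essential isotopy class, the core $X$, so essential multicurves are the $n$-fold parallel copies of $X$ and $\SSS(S)\cong\C[X]$. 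A three-puncture sphere admits exactly three essential classes, the three boundary-parallel curves $X$, $Y$, $Z$, which are pairwise disjoint, so essential multicurves are the products $X^aY^bZ^c$ and $\SSS(S)\cong\C[X,Y,Z]$.

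The three parts of the theorem are then immediate: the irreducible representations of a polynomial algebra $\C[X_1,\dots,X_k]$ are all one-dimensional and parametrized by $\C^k$ via evaluation on the generators, with two such representations isomorphic precisely when they agree on all generators. The main obstacle in this outline is the linear independence half of Przytycki's basis theorem; without it one would obtain $\SSS(S)$ only as a possibly proper quotient of the expected polynomial ring, leaving open the possibility of unexpected relations that would further constrain the complex numbers $x$, $y$, $z$ parametrizing the irreducible representations. By contrast, commutativity and the enumeration of essential simple closed curves are elementary surface topology.
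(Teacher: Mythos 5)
Your proof is correct and follows essentially the same route as the paper: commutativity comes from the observation that any two simple closed curves on a sphere with at most three punctures can be made disjoint, Schur's lemma then forces one-dimensional irreducible representations, and the explicit polynomial algebra structure rests on the basis theorem (the paper frames this as "an immediate extension of Kauffman's uniqueness result" and defers to Lickorish's book, which in turn relies on the Przytycki basis theorem you cite). Your remark that the genuinely nontrivial ingredient is the linear-independence half of that basis theorem — generation by resolving crossings is elementary — is accurate and is the step the paper glosses over by reference.
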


\subsection{The torus with $\leq 1$ puncture}

The algebraic structure of the skein algebra $\SSS(S)$ is much more interesting when $S$ is the torus with 0 or 1 puncture, although still manageable. 

\begin{thm} [\cite{BullPrz}]
\label{thm:PunctTor}
$ $
\begin{enumerate}
\item When $S$ is the one-puncture torus, the skein algebra $\SSS(S)$ admits a presentation with three generators $X_1$, $X_2$, $X_3$ and the three relations $$AX_iX_{i+1} - A^{-1}X_{i+1}X_i = (A^2 - A^{-2}) X_{i+2}$$ for $i=1$, $2$, $3$. 

\item When $S$ is the torus (with no puncture), the skein algebra is isomorphic to the quotient of the above algebra by the central element
$$ A^2X^2_1+ A^{-2}X_2^2+ A^2X^2_3- AX_1X_2X_3 -2A^2 - 2A^{-2}.$$
\vskip -\belowdisplayskip
\vskip -\baselineskip
\qed
\end{enumerate}
\end{thm}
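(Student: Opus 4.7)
The plan is to identify three natural generators of $\SSS(S)$ from the topology of the one-puncture torus, read off the three stated relations from a single application of the Kauffman skein relation, and close the argument with a normal-form versus basis comparison.

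First I would fix three essential simple closed curves $\gamma_1,\gamma_2,\gamma_3$ on the one-puncture torus $S$, pairwise intersecting transversely in exactly one point; concretely, a meridian, a longitude, and a diagonal $(1,1)$-curve. Setting $X_i = [\gamma_i] \in \SSS(S)$ with vertical framing, the superposition $X_i \cdot X_{i+1}$ is represented by a link diagram whose only crossing sits above the single point $\gamma_i \cap \gamma_{i+1}$. Applying the skein relation at that crossing produces two smoothings $\alpha$ and $\beta$, with $\beta$ isotopic to $\gamma_{i+2}$. Computing $X_{i+1}X_i$ in the same way (the crossing now has the opposite sign, so the coefficients $A$ and $A^{-1}$ swap roles), the $\alpha$-contribution cancels in the combination $AX_iX_{i+1} - A^{-1}X_{i+1}X_i$, leaving exactly $(A^2 - A^{-2})X_{i+2}$.

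To prove that the $X_i$ generate $\SSS(S)$ and that the three relations are complete, I would compare two bases. On the topological side, a general result of Przytycki provides a vector space basis of $\SSS(S)$ consisting of isotopy classes of framed essential multicurves, and on the one-puncture torus such multicurves are parametrized by a primitive slope and a positive multiplicity, so the basis can be listed explicitly. On the algebraic side, each relation rewrites an occurrence of $X_{i+1}X_i$ as $A^{-2}X_iX_{i+1}$ plus a term involving $X_{i+2}$, which strictly reduces a suitable inversion statistic. An application of the Bergman diamond lemma, whose only non-trivial overlap is $X_3 X_2 X_1$ reduced in two ways, then shows that the ordered monomials $X_1^a X_2^b X_3^c$ span the abstract algebra. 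A bijective matching of these monomials with multicurves, arranged by slope data, shows that the two families have the same size in each multidegree and gives the claimed isomorphism.

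For part (2), the closed torus $T$ is obtained from $S$ by gluing in a disk along the puncture, and the induced inclusion thickens to a surjective algebra homomorphism $\SSS(S) \to \SSS(T)$. Its kernel is generated by $[\gamma_\partial] + (A^2+A^{-2})[\varnothing]$, where $\gamma_\partial$ is a small loop around the puncture: in $T$ this loop bounds a disk and the standard Kauffman evaluation of a contractible vertically-framed unknot is $-A^2-A^{-2}$. Since $\gamma_\partial$ is homotopic to the commutator of the meridian and longitude, expanding it in the skein algebra by resolving all self-intersections (the quantum analogue of the classical Fricke identity $\Tr(ABA^{-1}B^{-1}) = \Tr(A)^2 + \Tr(B)^2 + \Tr(AB)^2 - \Tr(A)\Tr(B)\Tr(AB) - 2$) yields a polynomial expression in $X_1, X_2, X_3$; combining it with the loop constant reproduces exactly the stated central element. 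Checking centrality is then a finite computation using the three relations from part (1) to commute the element past each $X_i$.

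The main obstacle is the normal-form argument in part (1): the three relations mix the generators in a cyclically symmetric way, so confluence of the rewriting is not automatic, and the topological identification of multicurves with ordered monomials must respect the fact that a Dehn twist around one generator shears the slopes of the others. Both of these checks are elementary but bookkeeping-heavy, and form the technical heart of the proof in \cite{BullPrz}.
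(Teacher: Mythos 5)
The paper itself offers no proof of this theorem---it is stated as a citation to Bullock--Przytycki \cite{BullPrz} with a \textsc{qed} symbol---so there is no in-paper argument to compare against; your sketch must be judged on its own and against what \cite{BullPrz} actually does. Your derivation of the three relations in part~(1) is correct: with $\gamma_1,\gamma_2,\gamma_3$ the $(1,0)$, $(0,1)$, $(1,1)$ curves, resolving the single crossing of $X_iX_{i+1}$ and of $X_{i+1}X_i$ and taking the stated linear combination does cancel the unwanted smoothing and leaves $(A^2-A^{-2})X_{i+2}$. The diamond-lemma reduction you propose also goes through (I verified the $X_3X_2X_1$ overlap is resolvable), so the abstract algebra does have the ordered monomials $X_1^aX_2^bX_3^c$ as a basis. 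This is essentially the strategy of \cite{BullPrz}.

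However, two steps are stated more confidently than they deserve. First, the completeness argument in part~(1) is not a matter of the two families having ``the same size in each multidegree'': both are countably infinite, and the multicurve basis on the once-punctured torus also includes powers of the boundary-parallel curve, which your parametrization by ``a primitive slope and a positive multiplicity'' omits. What one actually needs (and what \cite{BullPrz} supplies) is an explicit bijection between $\Z_{\geq 0}^3$ and multicurves under which the surjection $\mathcal A\to\SSS(S)$ becomes unitriangular with respect to a suitable complexity filtration---this is a genuine leading-term argument, not a counting argument, and it is where the real work lives. Second, and more seriously, your part~(2) rests on the unproved assertion that the kernel of $\SSS(S_0)\to\SSS(S)$ is the principal ideal generated by $[\gamma_\partial]+(A^2+A^{-2})$. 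This is true, but it is a theorem (essentially the skein-module effect of attaching a $2$--handle), not a formality; without it the argument only shows the displayed element lies in the kernel, not that it generates it. You should either cite that result or replace it by a direct computation of a multicurve basis for the closed torus and a comparison of dimensions in the filtration, which is closer to what \cite{BullPrz} does. The observation that the central element is the quantum analogue of the Fricke trace identity for $\Tr[A,B]$ is correct and is a nice sanity check, but it does not by itself establish completeness of the relation.
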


Bullock and Przytycki \cite{BullPrz} give similar presentations for the skein algebras of the 4--puncture sphere and the 2-puncture torus.

It turns out that the skein algebra of the one-puncture torus, as described by Theorem~\ref{thm:PunctTor}(1), is isomorphic to a certain quantum deformation $\mathrm U_q'(\mathrm{so}_3)$ of $\mathrm{so}_3$ introduced in \cite{GavKli}. In particular, its representation theory has been analyzed in \cite{HavKP, HavPost}. The irreducible representations of $\SSS(S)$ in this case fall into two general categories, each subdivided into a few subcases. The first category arises for all values of $A$, whereas the second category is restricted to the case where $A$ is a root of unity. The representation theory for the unpunctured torus easily follows from the one-puncture case by Theorem~\ref{thm:PunctTor}(2).

In the rest of the article, we will consider the case of a general surface $S$, when $A$ is a root of unity. The representations of the skein algebra that we will encounter are very similar to the cyclic representations of $\mathrm U_q'(\mathrm{so}_3)$ that occur in \cite{HavPost}. 

\section{Invariants of invariants}
\label{sect:Invariants}

Recall that our (optimistic) goal is to classify all irreducible representations of the skein algebra $\SSS(S)$. 
Usually, the way one classifies a mathematical object is by extracting invariants, and by showing that these determine the object up to isomorphism. In our case, we need invariants of representations of the skein algebra, namely invariants of Kauffman brackets, namely invariants of  link invariants!

We will need $A$ to be a root of unity. Consequently, we assume that $A$ is a primitive $N$--root of unity, namely that $A^N=1$ and that $N$ is minimum for this property. In addition, we require that $N$ is odd. 

\subsection{Chebyshev polynomials}

Recall that the (normalized) \emph{$n$--th Chebyshev polynomial} (of the first type) is the polynomial $T_n(x)$ defined by the property that
$$
\Tr\, M^n = T_n(\Tr\, M) \text{ for every } M \in \SL(\C).
$$
In particular,
$T_0(x) = 2 $,
$T_1(x) = x $,
$T_2(x) = x^2-2 $,
$ T_3(x) = x^3 -3x$, 
 and $T_n(x)$ is more generally defined by the induction relation
 $$
  T_{n+1}(x) = xT_n(x) - T_{n-1}(x). 
$$

\begin{lem}
\label{lem:ChebCentral}
Suppose that $A$ is a primitive $N$--root of unity with $N$ odd. Let $K$ be a framed knot in $ S\times[0,1]$ that is represented by a simple closed curve in $S$, with vertical framing. Then, the evaluation $T_N([K])\in \SSS(S)$ of the $N$--th Chebyshev polynomial is  is central in the skein algebra $\SSS(S)$.
\end{lem}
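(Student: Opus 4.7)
The plan is to prove centrality of $T_N([K])$ by reducing to a local ``transparency'' property in a thickened annulus and then to an algebraic identity that becomes trivial at $N$-th roots of unity. Since $\SSS(S)$ is linearly spanned by skeins $[L]$ with $L$ a disjoint union of simple closed curves on $S$ carrying the vertical framing, and since $[L_1][L_2]=[L_1\sqcup L_2]$ whenever $L_1,L_2$ lie at distinct heights, I would first reduce to showing that $T_N([K])$ commutes with $[\gamma]$ for every simple closed curve $\gamma\subset S$. If $\gamma$ can be isotoped off of $K$ in $S$, the two skeins commute trivially; otherwise, I may assume $K$ and $\gamma$ meet transversely in points $p_1,\dots,p_k$ with $k\geq 1$.

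Next I would choose a tubular neighborhood $\nu(K)\cong S^1\times(-1,1)$ of $K$ in $S$ small enough that $\gamma\cap\nu(K)$ consists of $k$ disjoint arcs, each crossing $\nu(K)$ from one boundary component to the other. Then the commutator $T_N([K])[\gamma]-[\gamma]T_N([K])$ is supported entirely in $\nu(K)\times[0,1]$, and its vanishing reduces to a statement about each such arc independently. The local claim is that in the thickened annulus $\nu(K)\times[0,1]$, for any arc $\alpha$ crossing from boundary to boundary, the two skeins obtained by stacking $T_N([K])$ above $\alpha$ or below $\alpha$ represent the same element of the relative skein module with fixed endpoints $\partial\alpha$.

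To establish this local transparency, I would isotope $\alpha$ so that it meets the core $K$ transversely at two points inside the annulus, and expand both skeins using the Kauffman skein relation at these crossings. Multiplication by $[K]$ acts on the cabling powers $[K]^n$ via the Chebyshev recurrence $T_{n+1}(x)=xT_n(x)-T_{n-1}(x)$, and the coefficients accumulated as $\alpha$ is passed across the $N$ parallel strands implicit in $T_N([K])$ assemble, after diagonalization of the relevant $R$-matrix, into expressions of the form $q^N+q^{-N}$ with $q=A^{\pm2}$. Since $A^{2N}=(A^N)^2=1$, these collapse to the scalar $2$, which acts transparently. The hypothesis that $N$ is odd is then used to ensure that the framing twist factor of the form $(-A^3)^{\pm 2N}$ produced by twists of the parallel strands also collapses compatibly.

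The hard part will be making the local transparency computation precise. Executing the skein expansion near $\alpha\cap K$ and identifying the resulting sum with the Chebyshev collapse requires careful bookkeeping of framings, signs, and strand orderings, and is the technical heart of the argument. Once the local identity is established, summing contributions over the $k$ intersection points $p_1,\dots,p_k$ yields the global commutativity $[T_N([K]),[\gamma]]=0$ for every simple closed curve $\gamma$, and hence centrality of $T_N([K])$ in $\SSS(S)$.
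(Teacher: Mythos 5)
Your plan is a genuinely different route from the paper's. The paper's sketch reduces the computation to the case where $S$ is a one-puncture torus, where the skein algebra has the explicit presentation of Bullock--Przytycki (Theorem~\ref{thm:PunctTor}(1)); centrality of $T_N$ there is then read off either from Havl\'\i\v cek--Po\v sta's analysis of central elements in $\mathrm U_q'(\mathrm{so}_3)$, or from the Frohman--Gelca Product-to-Sum formula. You instead argue for \emph{transparency} of the Chebyshev cable $T_N([K])$: localize the commutator with a transversal curve $\gamma$ to a thickened annular neighborhood of $K$ and show that $T_N([K])$ can slide past each arc of $\gamma\cap\nu(K)$ one at a time. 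This localization-to-the-annulus strategy is sound and was, in fact, carried out in full by L\^e some years after this paper (showing more generally that the Chebyshev--Frobenius image is transparent in any $3$--manifold). It is arguably more elementary and more robust than the one-puncture-torus reduction, since it does not invoke the Bullock--Przytycki presentation or the $\mathrm U_q'(\mathrm{so}_3)$ theory; the trade-off is that one must carry out the local skein calculus carefully, which is where your sketch is weakest.

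Two specific points in your write-up are off. First, the setup ``isotope $\alpha$ so it meets the core $K$ transversely at two points'' is confused: a properly embedded arc crossing the annulus from one boundary component to the other meets the core an odd number of times, one in minimal position; the transparency computation should be set up at a single transverse crossing of the arc with the $N$ parallel strands coming from the powers $[K]^n$ in $T_N$. Second, your account of where the hypothesis that $N$ is odd enters is not correct: the framing factor $(-A^3)^{\pm 2N}$ equals $A^{\pm 6N}=1$ for any $N$ with $A^N=1$, odd or not. The actual role of $N$ odd is to guarantee that $A^2$ and $A^4$ are again primitive $N$-th roots of unity, which is what makes the telescoping of the skein-relation coefficients work out. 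Relatedly, the heuristic that the coefficients ``assemble into $q^N+q^{-N}$ which collapses to $2$'' is not quite the shape of the actual identity; the precise collapse is more delicate and is, as you say, the technical heart that would need to be established. With those details corrected and completed, this would be a valid alternative proof.
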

\begin{proof} [Sketch of proof]
One easily reduces the problem to the case of the one-puncture torus. 
The result then follows from the combination of \cite{BullPrz} and of the construction of central elements of $\mathrm U'_q(\mathrm{so}_3)$ in \cite{HavPost}. The more geometrically inclined reader may find the brute force computation of \cite{HavPost} (where the connection with Chebyshev polynomials is not explicit, but was later observed by V.~Fock) somewhat frustrating, and may prefer the more geometric argument provided by the Product-to-Sum Formula of \cite[Theorem~4.1]{FroGel}. 
\end{proof}

\subsection{The classical shadow}

Suppose that we are given an irreducible representation $\rho \col \SSS(S) \to \End(V)$.  If $K \subset S \times [0,1]$ is represented by a simple closed curve in $S$, with vertical framing, Lemma~\ref{lem:ChebCentral} asserts that $T_N([K])$ is central in $\SSS(S)$. As a consequence, Schur's lemma implies that the image of $T_N([K])$ under the irreducible representation $\rho$ is a multiple of the identity, and there exists a number $\kappa_\rho(K)\in \C$ such that 
$$
\rho\bigl( T_n ([K])\bigr) = \kappa_\rho (K) \Id_V.
$$

\begin{lem}
\label{lem:ChebA=1}
The above map $K\mapsto \kappa(K)$ induces an algebra homomorphism
$$
\kappa_\rho\col  \mathcal S^{+1}(\Sigma)  \to \C.
$$
\end{lem}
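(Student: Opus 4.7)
The plan is to factor $\kappa_\rho$ through an algebra homomorphism between two skein algebras. Concretely, I would invoke a \emph{Chebyshev--Frobenius homomorphism}
$$
F_N\col \mathcal S^{+1}(S) \to \SSS(S),
$$
whose image lies in the center of $\SSS(S)$ and which sends the class of a simple closed curve $K\subset S$ (with vertical framing) in $\mathcal S^{+1}(S)$ to $T_N([K]) \in \SSS(S)$. Lemma~\ref{lem:ChebCentral} is exactly the single-generator instance of the centrality part of this assertion; the full algebraic construction is carried out in the companion papers \cite{BonWon1, BonWon2}.

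Granting $F_N$, the composition $\rho\circ F_N \col \mathcal S^{+1}(S) \to \End(V)$ is an algebra homomorphism from a commutative algebra whose image commutes with the full image $\rho(\SSS(S))$. Since $\rho$ is irreducible, Schur's lemma identifies the commutant of $\rho(\SSS(S))$ in $\End(V)$ with $\C\cdot \Id_V$, so $\rho\bigl(F_N(x)\bigr)$ is a scalar multiple of $\Id_V$ for every $x\in \mathcal S^{+1}(S)$. Writing this scalar as $\kappa_\rho(x)$ produces an algebra homomorphism $\kappa_\rho\col \mathcal S^{+1}(S) \to \C$. It agrees with the map defined in the statement on a simple closed curve $K$, since $F_N([K]) = T_N([K])$ in that case.

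The main obstacle is the construction of $F_N$ itself, namely the verification that the assignment $[K] \mapsto T_N([K])$ on simple closed curves extends across the defining relations of $\mathcal S^{+1}(S)$. Two compatibilities must be established: multiplicativity, requiring a product-to-sum identity relating $T_N([K_1])\,T_N([K_2])$ in $\SSS(S)$ to the $T_N$-image of the product $[K_1][K_2]$ in $\mathcal S^{+1}(S)$; and compatibility with the classical skein relation, requiring that $T_N([K_1]) = T_N([K_0]) + T_N([K_\infty])$ in $\SSS(S)$ for each Kauffman triple of simple diagrams. My approach would be to combine the Frohman--Gelca product-to-sum formula \cite{FroGel} with the classical Chebyshev identity $T_m(x)T_n(x) = T_{m+n}(x) + T_{|m-n|}(x)$, using in an essential way that $N$ is odd with $A^N = 1$ to secure the requisite cancellations. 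For general $S$, this is ultimately reduced to local identities in an ideal triangulation via the quantum trace map discussed in \S\ref{sect:Triang}.
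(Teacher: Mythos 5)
Your proposal is correct, and it is built on exactly the same technical pillars as the paper's sketch: the Frohman--Gelca product-to-sum formula of \cite{FroGel} together with the hypothesis that $N$ is odd, applied to the evaluations $T_N([K])$. The packaging, however, differs in two ways worth noting. First, you introduce a genuine \emph{Chebyshev--Frobenius homomorphism} $F_N\col \mathcal S^{+1}(S)\to \SSS(S)$ with central image and factor $\kappa_\rho$ through $\rho\circ F_N$ followed by Schur's lemma; the paper's sketch only asserts the weaker statement that $\kappa$ itself satisfies the $A=+1$ skein relation, i.e.\ that $\rho\bigl(T_N([K_1]) - T_N([K_0]) - T_N([K_\infty])\bigr)=0$ for every Kauffman triple, without claiming this already holds in $\SSS(S)$. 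Your stronger formulation is true (and is how this material was subsequently developed), but a reader should be aware that it front-loads more than the lemma strictly requires. Second, for the reduction to a computable local model, the paper reduces to the one-punctured torus using a formula along the lines of \cite[Lemma~14.1]{Lick}, whereas you propose reducing to local identities in an ideal triangulation via the quantum trace map of \S\ref{sect:Triang}. Both reductions are plausible; the paper's is the more elementary one for this particular lemma, while yours aligns with the machinery used later in the paper for Theorem~\ref{thm:MainThm}. The remainder of your argument --- Schur's lemma applied to the commutant of the irreducible $\rho$, the Chebyshev identity $T_mT_n = T_{m+n}+T_{|m-n|}$ as the classical input --- is sound and matches the intended computation.
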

\begin{proof}
This essentially is equivalent to the property that $\kappa$ satisfies the skein relation for $A=+1$. Again, one  first reduces the problem to the case of the one-puncture torus (but this here requires using a formula such as \cite[Lemma~14.1]{Lick} and the hypothesis that $N$ is odd), and then apply the Product-to Sum Formula of \cite[Theorem~4.1]{FroGel}. 
\end{proof}

If the homomorphism $\kappa$ of Lemma~\ref{lem:ChebA=1} is non-trivial,  Theorem~\ref{thm:A=1} associates to it an element $r_\rho \in \RS(\Sigma)$. By definition, this spinned homomorphism  $r_\rho \col \pi_1(\Sigma) \to \PSL(\C)$ is the \emph{classical shadow} of the representation
$$\rho\col \SSS(\Sigma) \to \End(V).$$
(The classical shadow is undefined if $\kappa_\rho$ is trivial.)

\subsection{Puncture invariants} When $S$ has non-empty boundary, the skein algebra $\SSS(S)$ contains central elements that are more obvious than those defined using Chebyshev polynomials. Indeed, if $P_i$ is a curve parallel to the $i$--th boundary component of $S$ and is endowed with the vertical framing, the skein $[P_i]\in \SSS(S)$  is clearly central in $\SSS(S)$. 

As before, an irreducible representation  $\rho \col \SSS(S) \to \End(V)$ provides a number $p_i\in \C$ such that
$$\rho( [P_i]) = p_i \Id_V.$$
This number $p_i\in \C$ is the \emph{$i$--th puncture invariant} of the irreducible representation~$\rho$. 

There clearly should be a relationship between these puncture invariants and the classical shadow $r_\rho$, involving the Chebyshev polynomial $T_N$. 

\begin{lem}
If $r_\rho \in \RS(S)$ is the classical shadow of the irreducible representation $\rho \col \SSS(S) \to \End(V)$,  
$$T_N(p_i) = \kappa_\rho([P_i]) =- \Tr\, r_\rho(P_i) $$
where $\Tr\, r_\rho(P_i) = (-1)^{\sigma(P_i)} \Tr\, \widehat r_\rho(P_i)$ is defined as in {\upshape\S \ref{sect:A=+1}}. 
\qed
\end{lem}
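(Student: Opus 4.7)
The plan is to establish the two claimed equalities separately, both essentially by unwinding the definitions of $p_i$, $\kappa_\rho$, and the classical shadow $r_\rho$, and chaining them together through Theorem~\ref{thm:A=1}. I expect no serious obstacle: the lemma is really a consistency statement asserting that the two families of invariants attached to $\rho$ (the puncture invariants, and the classical shadow) talk to each other via the Chebyshev polynomial $T_N$, precisely as forced by the central-element construction.

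For the first equality $T_N(p_i) = \kappa_\rho([P_i])$, I would apply the representation $\rho$ to the central element $T_N([P_i]) \in \SSS(S)$ in two different ways. On the one hand, by definition of the $i$-th puncture invariant, $\rho([P_i]) = p_i\,\Id_V$; since $\rho$ is an algebra homomorphism and $T_N$ is a polynomial with integer coefficients,
$$
\rho\bigl( T_N([P_i]) \bigr) = T_N\bigl( \rho([P_i]) \bigr) = T_N(p_i\,\Id_V) = T_N(p_i)\,\Id_V.
$$
On the other hand, $P_i$ is represented by a simple closed curve (boundary-parallel) with vertical framing, so the definition of $\kappa_\rho$ preceding Lemma~\ref{lem:ChebA=1} gives $\rho\bigl(T_N([P_i])\bigr) = \kappa_\rho(P_i)\,\Id_V$, and under the algebra-homomorphism extension $\kappa_\rho\col\mathcal S^{+1}(S)\to\C$ of Lemma~\ref{lem:ChebA=1} this scalar is $\kappa_\rho([P_i])$. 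Comparing the two expressions yields the desired equality.

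The second equality $\kappa_\rho([P_i]) = -\Tr\,r_\rho(P_i)$ is then immediate from the very definition of the classical shadow: $r_\rho \in \RS(S)$ is by construction the spinned homomorphism corresponding to the non-trivial algebra homomorphism $\kappa_\rho$ under the bijection of Theorem~\ref{thm:A=1}. That bijection is given on a connected skein $[K]$ by $\kappa_\rho([K]) = -(-1)^{\sigma(K)}\Tr\,\widehat r_\rho(K)$, which is exactly $-\Tr\,r_\rho(K)$ in the sign convention recalled in the lemma statement. Specializing to the connected boundary-parallel curve $P_i$ finishes the proof.

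The only subtle point to verify carefully is that the original scalar $\kappa_\rho(K)$ (defined for a single simple closed curve $K$ with vertical framing via Schur's lemma) agrees with the value $\kappa_\rho([K])$ produced by the algebra-homomorphism extension; but this compatibility is exactly how the extension is constructed in the proof of Lemma~\ref{lem:ChebA=1}, so no additional argument is needed here.
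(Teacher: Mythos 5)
Your proof is correct and follows the only natural route: the paper states this lemma without proof (it ends with \qed immediately after the statement), treating it as an immediate unwinding of the definitions of $p_i$, $\kappa_\rho$, and the classical shadow via Theorem~\ref{thm:A=1}, which is precisely what you do. The chain $\rho(T_N([P_i])) = T_N(p_i)\,\Id_V = \kappa_\rho(P_i)\,\Id_V$ followed by the identification $\kappa_\rho = \rho_{r_\rho}$ on the connected skein $[P_i]$ is exactly the intended argument.
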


Namely, the puncture invariant $p_i$ belongs to the finite set $T_N^{-1} \bigl( - \Tr\, r_\rho(P_i)  \bigr)$, and is therefore determined by the classical shadow up to finitely many choices.

\section{The conjectural classification of irreducible representations}
\label{sect:ConjClass}

Remember that we are restricting attention to the case where the number $A$ intervening in the definition of the skein algebra $\SSS(S)$ is a primitive $N$--root of unity, with $N$ odd. 

We then associated to each irreducible representation $\rho \col \SSS(S) \to \End(V)$ a spinned homomorphism $r_\rho \in \RS(S)$ (unless the algebra homomorphism $\kappa_\rho$ of Lemma~\ref{lem:ChebA=1} is trivial) and puncture invariants $p_i \in T_N^{-1} \bigl( - \Tr\, r_\rho (P_i)  \bigr)$. 

\begin{conj}
\label{conj:Classif}
Suppose that we are given:
\begin{enumerate}
\item a spinned homomorphism $r \col \pi_1(\Sigma) \to \PSL(\C)$;
\item for each boundary component of $S$, a number $p_i \in T_N^{-1} \bigl( - \Tr\, r(P_i)  \bigr)$
\end{enumerate}
Then, up to isomorphism,  there is a unique irreducible representation $\rho\col \SSS(\Sigma) \to \End(V)$ whose invariants are $r$ and the $p_i$.
\end{conj}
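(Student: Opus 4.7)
The plan is to reduce the classification to the well-understood representation theory of a quantum torus, by passing through an ideal triangulation $\lambda$ of the surface $S$. I would use the quantum trace map of \cite{BonWon1, BonWon2}, an injective algebra homomorphism $\SSS(S) \hookrightarrow \ZZ(\lambda)$ into a Chekhov--Fock style quantum torus $\ZZ(\lambda)$ generated by invertible elements $Z_e$, one per edge $e$ of $\lambda$, subject to $A$-commutation relations $Z_e Z_{e'} = A^{2\sigma_{ee'}} Z_{e'} Z_e$ determined by the combinatorics of $\lambda$. When $A$ is a primitive $N$-th root of unity with $N$ odd, the center of $\ZZ(\lambda)$ is explicitly generated by the $N$-th powers $Z_e^N$ together with the boundary generators, and the irreducible representations of $\ZZ(\lambda)$ are classified up to isomorphism by their central character, as is standard for quantum tori at roots of unity.

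For existence, the data $(r, p_i)$ can be translated into an assignment of complex shear coordinates $z_e \in \C^*$ to the edges of $\lambda$, using the holonomy description of $\PSL(\C)$-characters on $\lambda$; the puncture weights $p_i$, together with the constraint $T_N(p_i) = -\Tr\,r(P_i)$, fix the values to be assigned to the boundary central elements. Setting $Z_e^N \mapsto z_e$ and the boundary generators to the prescribed values determines a central character of $\ZZ(\lambda)$, hence a unique irreducible representation $\tilde\rho\col \ZZ(\lambda) \to \End(V)$. Restriction along the quantum trace then produces a representation $\rho\col \SSS(S) \to \End(V)$, and a direct computation using the classical identity $\Tr\, M^N = T_N(\Tr\, M)$ on $\SL(\C)$ in shear coordinates confirms that its classical shadow and puncture invariants recover exactly $(r, p_i)$. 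Irreducibility of $\rho$ reduces to showing that the image of $\SSS(S)$ already acts irreducibly on $V$, which should follow from a dimension count relating the rank of the central quotient of $\SSS(S)$ inside that of $\ZZ(\lambda)$ to $\dim V$.

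The main obstacle is uniqueness: given another irreducible $\rho'\col \SSS(S) \to \End(V')$ with the same invariants, one must construct an intertwining isomorphism $V \cong V'$. The natural strategy is to argue that $\rho$ and $\rho'$ both extend, perhaps after a localization, to irreducible representations of the ambient quantum torus $\ZZ(\lambda)$ with matching central characters, and then invoke the quantum-torus uniqueness. The difficulty is that the image of the quantum trace is a proper subalgebra of $\ZZ(\lambda)$, so lifting an irreducible $\SSS(S)$-representation to an irreducible $\ZZ(\lambda)$-representation demands delicate control over the comparison of the two centers and over the compatibility with the Pachner moves relating different triangulations. The hypothesis that $N$ is odd should enter decisively, through the good behaviour of $T_N$ under superposition and under Barrett's spin isomorphism, but turning this into a clean uniqueness argument valid for every surface is presumably the reason why the statement must currently be phrased as a conjecture rather than a theorem.
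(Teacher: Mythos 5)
The statement you are addressing is labeled a \emph{conjecture} in the paper, and indeed the paper does not prove it in full: only the existence half is established (as Theorem~\ref{thm:MainThm}), and even that only in outline. Your overall strategy for existence---embedding $\SSS(S)$ into a Chekhov--Fock quantum torus via the quantum trace of \cite{BonWon1}, classifying irreducible representations of the quantum torus at a root of unity by their central character, and pulling back---is the same strategy the paper follows, so on that front you are aligned with the source.

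However, there is a genuine gap in your existence argument, and it is exactly at the point you treat most lightly. You write that ``irreducibility of $\rho$ reduces to showing that the image of $\SSS(S)$ already acts irreducibly on $V$, which should follow from a dimension count.'' The paper states flatly that this fails: the representation of $\SSS(S_0)$ obtained by restricting an irreducible representation of $\T_\tau^A$ along the quantum trace is \emph{not} irreducible, and one must pass to an irreducible summand to proceed. Any dimension count of the sort you sketch would therefore produce a contradiction rather than a proof, and the construction needs the extra step of identifying and restricting to the correct component. Relatedly, for a closed surface the quantum trace only lands in a Chekhov--Fock algebra after drilling punctures, so one also has to show that the resulting representation of $\SSS(S_0)$ descends to a representation of $\SSS(S)$ --- i.e.\ is invariant under isotopies crossing the drilled punctures. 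The paper flags this as a nontrivial step that works only after passing to an irreducible component and only for the specific choice $h_i = A^{-1}$ of puncture parameters; your construction does not address it. Finally, verifying that the classical shadow of the resulting representation is the prescribed $r$ is not the direct computation you suggest but, per the paper, rests on ``miraculous cancellations'' in evaluating $T_N$ on images of skeins. Your honest acknowledgment that uniqueness is the unresolved obstacle is consistent with the fact that the statement remains a conjecture, though your proposed route (lifting an arbitrary irreducible $\SSS(S)$-module to the quantum torus and matching central characters) is not one the paper endorses or develops.
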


Note that this conjecture leaves aside the irreducible representations for which the homomorphism $\kappa_\rho\col \mathcal S^1(S) \to \C$ provided by Lemma~\ref{lem:ChebA=1} is trivial. We conjecture that these fit in a larger picture, valid for all values of $A$. 

More precisely, recall that the (finite-dimensional) irreducible representations of the quantum group $\mathrm U_q(\mathrm{sl}_2)$ fall into two wide categories: the \emph{generic representations}, which occur for all values of $q$, and the \emph{cyclic representations} restricted to the case where $q$ is a root of unity. The generic representations are essentially rigid, depending only on the dimension and on a finite amount of additional information, but there are whole moduli spaces of cyclic representations, which depend on finitely many continuous choices of parameters. See for instance \cite[\S VI.5]{Kass}. 

Each of these (types of) representations of $\mathrm U_q(\mathrm{sl}_2)$ should induce a representation of the skein algebra $\SSS(S)$ when $A=q^{\frac12}$, by an extension of the framework of \cite{BonWon1} discussed below (see also \cite{BFK2} for more ideas), and we conjecture that all irreducible representations of $\SSS(S)$ should be obtained in this way.

In this framework, the generic representations of $\mathrm U_q(\mathrm{sl}_2)$ should essentially give unique representations of $\SSS(S)$. 

When $A$ is a root of unity, the cyclic representations  of $\mathrm U_q(\mathrm{sl}_2)$ should give the representations of $\SSS(S)$ discussed in Conjecture~\ref{conj:Classif} (with minor variations when $A$ is a primitive $N$--root of unity with $N$ even). The classical shadow is here a way to keep track of the moduli parameters describing the cyclic representations appearing in the construction.

We conjecture that, conversely, every irreducible representation of the skein algebra is obtained in this way, namely is associated by such a construction to irreducible representations of the quantum group $\mathrm U_q(\mathrm{sl}_2)$.

\medskip
This conjectural picture will clearly need to be adjusted as our understanding progresses. At this point, however, we are able to prove the existence part of Conjecture~\ref{conj:Classif}.

\begin{thm}
\label{thm:MainThm}
Suppose that $A$ is a primitive $N$--root of unity with $N$ odd, and that we are given:
\begin{enumerate}
\item a spinned homomorphism $r \col \pi_1(\Sigma) \to \PSL(\C)$;
\item for each boundary component of $S$, a number $p_i \in T_N^{-1} \bigl( - \Tr\, r(P_i)  \bigr)$
\end{enumerate}
Then there exists an irreducible representation $\rho\col \SSS(\Sigma) \to \End(V)$ whose classical shadow is $r\in \RS(S)$ and whose puncture invariants are equal to the $p_i$. 
\end{thm}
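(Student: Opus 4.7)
The plan is to construct $\rho$ explicitly via the representation theory of the Chekhov--Fock quantum Teichm\"uller space, using as a bridge the quantum trace homomorphism of \cite{BonWon1}.

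Assume first that $S$ has at least one puncture (the closed case will be reduced to this at the end), and choose an ideal triangulation $\lambda$ of $S$. Associated to $\lambda$ is the Chekhov--Fock algebra $\mathcal Z^\omega_\lambda$, a non-commutative quantum torus generated by variables $Z_e^{\pm 1}$ attached to the edges $e$ of $\lambda$, with Weyl-type commutation relations governed by the combinatorics of $\lambda$ and a quantum parameter $\omega$ which is a fourth root of $A$. The fundamental input is the quantum trace homomorphism
\[
\Tr^\omega_\lambda \col \SSS(S) \longrightarrow \widehat{\mathcal Z}^\omega_\lambda
\]
of \cite{BonWon1}, which realises the skein algebra inside a suitable extension of $\mathcal Z^\omega_\lambda$. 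Its classical limit recovers Fock's formulas expressing the classical trace $\Tr\,r(K)$ in terms of exponentiated shear coordinates on $\lambda$.

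Since $A$ is a primitive $N$-th root of unity with $N$ odd, each $Z_e^N$ is central in $\mathcal Z^\omega_\lambda$, and the irreducible representations of this quantum torus are cyclic: they are determined up to isomorphism by the scalars through which the central monomials (the $Z_e^N$, together with certain boundary monomials attached to the punctures) act. Given the data $(r, p_i)$, use Fock's parametrisation to lift $r$ to a tuple of exponentiated complex shear coordinates $(z_e)$; this produces scalars $\zeta_e = z_e^N$ for the action of the $Z_e^N$. The spin component of $r$ pins down the signs amongst the $z_e$, while the puncture invariants $p_i$ determine the $N$-th roots of the boundary monomials, compatibility being guaranteed by the condition $T_N(p_i) = -\Tr\,r(P_i)$. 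These choices give an irreducible cyclic representation $\widetilde\rho\col \widehat{\mathcal Z}^\omega_\lambda \to \End(V)$, and we set $\rho = \widetilde\rho \circ \Tr^\omega_\lambda$.

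To verify that $\rho$ has classical shadow $r$ and puncture invariants $p_i$, one identifies the scalar actions of $[P_i]$ and of $T_N([K])$ (the central elements of Lemma~\ref{lem:ChebCentral}). For $[P_i]$ this is immediate, since $\Tr^\omega_\lambda([P_i])$ is the relevant boundary monomial, on which $\widetilde\rho$ is set up to act by $p_i$. For $T_N([K])$ one uses a ``quantum Frobenius'' property of $\Tr^\omega_\lambda$: the image $\Tr^\omega_\lambda(T_N([K]))$ can be written as a polynomial in the central monomials $\zeta_e = z_e^N$ that matches Fock's classical trace formula evaluated at the shear coordinates $z_e$, hence evaluates under $\widetilde\rho$ to $-\Tr\,r(K)$, as required.

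The main obstacle will be to prove that $\rho$ is irreducible. As the pull-back of an irreducible representation by an algebra homomorphism, irreducibility is not automatic; one must show that the image $\Tr^\omega_\lambda(\SSS(S))$, together with the center of $\mathcal Z^\omega_\lambda$ (which acts on $V$ by scalars), generates a subalgebra of $\End(V)$ with trivial commutant. This will require a careful combinatorial analysis of which monomials of $\mathcal Z^\omega_\lambda$ lie in the image of $\Tr^\omega_\lambda$, exploiting the explicit state-sum description of the quantum trace. Secondary technical issues include independence from the triangulation $\lambda$, which follows from the Chekhov--Fock flip isomorphisms and their compatibility with $\Tr^\omega_\lambda$, and the closed-surface case, handled by puncturing $S$, applying the punctured construction, and then restricting to those representations for which the newly introduced puncture invariant is the trivial value determined by the original $r$.
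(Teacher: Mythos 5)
Your overall strategy matches the paper's: pull back an irreducible representation of the Chekhov--Fock quantum torus through the quantum trace embedding of \cite{BonWon1}, classify those representations via invariant data at the punctures (this is exactly the content of Proposition~\ref{prop:BonLiu}), and verify the classical shadow through a Chebyshev computation. Two places in the plan, however, would not go through as written.

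First, the irreducibility step as stated is false, not merely nontrivial. You propose to show that the pullback $\widetilde\rho\circ\Tr^\omega_\lambda$ is itself irreducible by proving that the image of the quantum trace has trivial commutant in $\End(V)$. The paper is emphatic (\S\ref{sect:Triang}) that the answer is a ``definite no'': the pulled-back representation of the skein algebra is reducible. One must decompose it and pass to a specific irreducible summand, and then one inherits the further burden of checking that the classical shadow and the puncture invariants survive the restriction. A proof attempt aimed at establishing trivial commutant would simply get stuck.

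Second, the closed-surface case, which the paper singles out as the hard one, cannot be dispatched by ``restricting to the right puncture invariant.'' After drilling the vertices to obtain $S_0$, the skein algebra $\SSS(S)$ of the closed surface is a genuine quotient of $\SSS(S_0)$: you must show $\rho_0([K])=\rho_0([K'])$ whenever $K$ and $K'$ become isotopic only after filling in the punctures. This descent is an honest statement about the quantum trace, not merely a selection of parameters, and the paper indicates it succeeds only with the specific choice $h_i=A^{-1}$ for the $N$-th roots at the drilled punctures \emph{and} only on the appropriate irreducible component. Your phrase ``the trivial value determined by the original $r$'' leaves $N$ candidates and does not explain why any choice would force the required isotopy invariance across the punctures.
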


The rest of this article is devoted to a discussion of the proof of Theorem~\ref{thm:MainThm}. Details will appear elsewhere \cite{BonWon2}. 

\section{Triangulations}
\label{sect:Triang}

For simplicity, we will focus attention to the case where the surface is closed, namely without punctures. This also turns out to be the harder case. 

We follow a strategy proposed by other authors \cite[Chap.~3]{Pal}, and drill holes in $S$. More precisely, let $S_0$ be a surface obtained by removing finitely many points $v_1$, $v_2$, \dots, $v_p$ from $S$. 

Choose an ideal triangulation $T$ for $S_0$, namely a triangulation of the closed surface $S$ whose vertices are exactly the punctures $v_1$, $v_2$, \dots, $v_p$. We require that the end points of each edge are distinct, which is easily achieved by removing additional punctures if necessary.

We now associate an algebraic object to this triangulation.  Its representation theory is relatively simple, and will help us understand that of the skein algebra. 

\subsection{The Chekhov-Fock algebra of a train track} 

In each triangle of the triangulation $T$, join the midpoints of the edges by three arcs with disjoint interiors, meeting orthogonally the edges as in Figure~\ref{fig:Triangle}. 

\begin{figure}[htbp]
\includegraphics{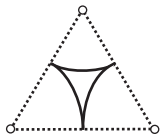}

\caption{A triangle and its train track}
\label{fig:Triangle}
\end{figure}

These arcs fit together to form a train track $\tau$ on the surface $S_0$, as illustrated in  Figure~\ref{fig:TriangTrainTrack}.

\begin{figure}[htbp]

\includegraphics{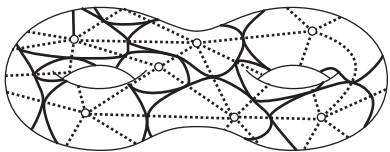}
\caption{A triangulation and its associated train track}
\label{fig:TriangTrainTrack}
\end{figure}

To keep in line with the themes of the conference, this train track $\tau$ is traditionally  used to specify the 1--dimensional submanifolds of $S_0$ that are normal with respect to the triangulation. More precisely, these normal 1--submanifolds are in one-to-one correspondence with systems of  non-negative integer edge weights for $\tau$ satisfying the \emph{Switch Condition} that, at each switch, the weights of the two edges incoming on one side of the switch add up to the sum of the weights of the two edges outgoing on the other side. 

Let $\mathcal W(\tau; \Z)$ be the set of integer weight systems for $\tau$ that satisfy the Switch Condition. Note that we are not any more restricting these weights to be non-negative.  The set $\mathcal W(\tau; \Z)$ is an abelian group under addition, and is easily computed to be isomorphic to $\Z^{6g+3p-6}$. 

The space $\mathcal W(\tau; \Z)$ is equipped with a very natural antisymmetric bilinear form, the \emph{Thurston intersection form}
$$
\omega \col \mathcal W(\tau;\Z) \times \mathcal W(\tau;\Z) \to \Z.
$$

The Thurston form is essentially an algebraic intersection number. To define $\omega$,  perturb $\tau$ to a train track $\tau'$ that is transverse to $\tau$. Each intersection point $x$ of $\tau$ with $\tau'$ comes with a sign: indeed, any choice of local orientation of $\tau$ near $x$ specifies a local orientation of $\tau'$, if the train tracks are sufficiently $\mathrm C^1$--close; reversing the local orientation of $\tau$ reverses the local orientation of $\tau'$, and consequently does not change the algebraic sign of the intersection of $\tau$ with $\tau'$ at $x$. Then, given $\alpha$, $\beta\in \mathcal W(\tau; \Z)$, the number $\omega(\alpha, \beta)$  counts  these intersection signs when the edges of $\tau$ are equipped with the multiplicity $\alpha$ and the edges of $\tau'$ with the multiplicity $\beta$. 

We now define an algebra $\T_\tau^A$ as follows. As a vector space, $\T_\tau^A$ is freely generated by the elements of  $ \mathcal W(\tau;\Z) $; namely, $\T_\tau^A$ consists of all formal linear combinations $\sum_{i=1}^j a_i\alpha_i$ where $a_i\in \C$ and $\alpha_i\in \mathcal W(\tau; \Z)$. If we defined a multiplication on $\T_\tau^A$ by the group law of $ \mathcal W(\tau;\Z) $, namely by the property that $\alpha \cdot \beta = (\alpha+\beta) \in  \mathcal W(\tau;\Z) $ for $\alpha$, $\beta\in \mathcal W(\tau; \Z)$, we would obtain the group algebra $\C[ \mathcal W(\tau;\Z) ]$. Instead we twist this group law by the constant $A$ and by the Thurston form $\omega$. More precisely, we define the multiplication of two elements $\alpha$, $\beta\in \mathcal W(\tau; \Z)$ as 
$$
\alpha \cdot \beta = A^{\frac12\omega(\alpha, \beta)}(\alpha+\beta) 
$$
and linearly extend this multiplication to $\T_\tau^A$.

This algebra $\T_\tau^A$ is the \emph{Chekhov-Fock algebra} of the train track $\tau$. The reader will recognize here one of the many avatars of the \emph{quantum Teichm\"uller space} \cite{Foc, CheFoc1, Kash, BonLiu, Liu1}. If we restrict attention to those edge weight systems $\alpha \in  \mathcal W(\tau;\Z)$ where, at each switch, the weights of the two edges incoming on one side have the same parity, the corresponding subalgebra of $ \mathcal W(\tau;\Z)$ is exactly the Chefock-Fock algebra of the ideal triangulation $T$, as defined in \cite{BonLiu} and for $q=A^{-2}$. Therefore, $\T_\tau^A$ is obtained from the Chekhov-Fock algebra of $T$ by augmenting it with certain square roots of generators. The Chekhov-Fock algebra $\T_\tau^A$ appears in a different presentation  in \cite[\S 2.3]{BonWon1}, where it is called  $\mathcal Z_T^{A^{-\frac12}}$.

\subsection{From the skein algebra to the quantum Teichm\"uller space}

The algebraic structure of the Chekhov-Fock algebra $ \T_\tau^A$ is remarkably simple. A key step in our analysis connects it to the skein algebra $\SSS(S_0)$. 

\begin{thm}[\cite{BonWon1}]
\label{thm:QTraces}
There exists a natural embedding $\SSS(S_0) \to \T_\tau^A$. \qed
\end{thm}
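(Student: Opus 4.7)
The plan is to construct the embedding by a local state-sum construction in each triangle of $T$, and then to verify globally that the construction respects all defining relations of $\SSS(S_0)$. First I would put a framed link $K\subset S_0\times[0,1]$ into general position with respect to the triangulation: isotope so that its projection $D\subset S_0$ is a generic link diagram, all crossings of $D$ lie in the interior of triangles of $T$, $D$ meets each edge of $T$ transversely away from the vertices, and the framing is vertical at every point where $D$ meets an edge. The intersections of $D$ with the edges of $T$ then cut $K$ into tangles, one inside each triangle $t$.

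Next I would define the quantum trace $\mathrm{Tr}_\tau^A(K)\in\T_\tau^A$ by a state sum. A \emph{state} assigns a sign $\pm$ to each intersection point of $D$ with the edges of $T$; in each triangle $t$ the local tangle becomes, for each state, a monomial in the local Chekhov-Fock algebra of $t$ determined by how signs at the three sides of $t$ combine into a weight at each of the three corners together with an explicit power of $A$ encoding the crossings and cap/cup structure inside $t$. Summing these triangle contributions over all states and gluing them along shared edges produces an element of $\T_\tau^A$; the twisting of the multiplication by the Thurston form $\omega$ is precisely designed so that glued monomials record the correct intersection data between adjacent triangles.

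The third step is to check that $\mathrm{Tr}_\tau^A$ descends to a well-defined algebra homomorphism $\SSS(S_0)\to\T_\tau^A$. The essential local calculation is that, for a single crossing inside a triangle, the state sum satisfies $\mathrm{Tr}_\tau^A(K_1)=A^{-1}\mathrm{Tr}_\tau^A(K_0)+A\,\mathrm{Tr}_\tau^A(K_\infty)$; this is a direct verification using the definition of the multiplication in $\T_\tau^A$ via $\omega$. Invariance under Reidemeister II and III, independence from the chosen generic diagram, and the fact that superposition corresponds to multiplication in $\T_\tau^A$ (through stacking the diagrams and picking up the correct crossing weights) then proceed by standard diagrammatic arguments.

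The hard part will be injectivity. My plan is to use a basis of $\SSS(S_0)$ by isotopy classes of essential framed multicurves and to extract from $\mathrm{Tr}_\tau^A(\gamma)$ a leading monomial that recovers the edge-weight vector $\alpha_\gamma\in\mathcal W(\tau;\Z)$ of the normal representative of $\gamma$ with respect to $T$. Because distinct essential multicurves have distinct normal edge-weight vectors, the leading monomials of their images land in distinct basis elements of $\T_\tau^A$, and a triangularity argument yields linear independence. The delicate step is controlling cancellations between non-dominant states and extending the triangularity to skeins whose underlying diagram has crossings and is not directly carried by $\tau$; this is where the bulk of the technical work sits, and where one genuinely exploits the odd-order root-of-unity hypothesis (and the augmentation of the Chekhov-Fock algebra of $T$ by square roots of the generators) to keep the state sum non-degenerate.
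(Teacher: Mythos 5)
Your overall strategy---a state sum over sign assignments at the intersections of the diagram with the triangulation edges, local monomial contributions per triangle with the Thurston-form twisting handling compatibility across shared edges, verification of the Kauffman relation and Reidemeister invariance, and injectivity by a triangularity argument using the multicurve basis and normal edge-weight vectors---is essentially the construction carried out in \cite{BonWon1}, so you have the right skeleton. Two points where your setup diverges from theirs and would cause real trouble if pushed through: first, \cite{BonWon1} does not allow crossings inside the triangles. They pass to a \emph{split} ideal triangulation, inserting a biangle along each edge of $T$, and the framed link is isotoped so that all crossings, caps, and cups lie in the biangles; each triangle then sees only disjoint arcs joining distinct sides, and the triangle contribution is a pure Chekhov--Fock monomial. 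Separating the Kauffman-bracket data (in biangles, which carry the ordinary skein calculus) from the Chekhov--Fock data (in triangles, which carry the quantum torus structure) is precisely what makes the skein-relation verification and the change-of-triangulation naturality tractable; letting crossings sit inside a triangle, as you propose, mixes these two pieces and the local check you describe does not go through cleanly.

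Second, and more importantly, your claim that the injectivity ``genuinely exploits the odd-order root-of-unity hypothesis'' is wrong. Theorem~\ref{thm:QTraces} holds for \emph{all} $A\in\C^*$: the quantum trace homomorphism $\SSS(S_0)\to\T_\tau^A$ is defined and injective with no restriction on $A$. The triangularity argument works uniformly, because the skein algebra has a basis by simple multicurves (no crossings to worry about once you reduce to this basis), and the image of a simple multicurve has a nonzero leading term in the direction of its normal edge-weight vector, with distinct multicurves giving distinct leading monomials. The root-of-unity hypothesis only enters the program afterward, when one studies the finite-dimensional representation theory of $\T_\tau^A$ (as in Proposition~\ref{prop:BonLiu}) and needs the algebra to be finite over its center; it plays no role in constructing or establishing injectivity of the embedding itself. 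You are right, though, that the passage from the Chekhov--Fock algebra of $T$ to the train-track algebra $\T_\tau^A$ (adjoining square roots of generators) is needed for the local triangle formula to make sense.
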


When $A=1$, the homomorphism $\mathcal S^1(S) \to \T_\tau^1$ of Theorem~\ref{thm:QTraces} has a geometric interpretation. Using the technique of pleated surfaces, a system of complex weights associated to the edges of $T$ determines a homomorphism $r\col \pi_1(S_0) \to \PSL(\C)$. Choosing suitably compatible square roots for these complex numbers  even determines a spinned homomorphism $r\in \RS(S_0)$. The homomorphism $\mathcal S^1(S) \to \T_\tau^1$ then associates to a framed link $K\subset S\times [0,1]$ the Laurent polynomial in these square roots that gives $\Tr\,r(K)$ for the associated $r\in \RS(S)$. See \cite[\S 1]{BonWon1}. The general homomorphism $\SSS(S_0) \to \T_\tau^A$ can be seen as a non-commutative deformation of this formula. The challenge is to make it natural, namely well-behaved with respect to changing the ideal triangulation $T$ of $S_0$ (and therefore with respect to the action of the mapping class group of $S_0$). 

Theorem~\ref{thm:QTraces} was conjectured in \cite{CheFoc2}. See also \cite{ChePen1, Hiatt} for partial results in this direction. 

The  Chekhov-Fock algebra $ \T_\tau^A$ is an example of a quantum torus and, as such, its irreducible representations are easy to classify. In fact, when $A$ is a primitive $N$--root of unity, they are classified by elements of $\mathcal W(\tau; \C)$, namely complex edge weight systems, together with choices of $N$--roots for certain complex numbers associated to the boundary components of $S_0$ by the complex edge weights. 

Interpreting these complex edge weights as shear-bend parameters for pleated surfaces provides Proposition~\ref{prop:BonLiu} below. We need to introduce some definitions. 

A \emph{peripheral subgroup} of $\pi_1(S_0)$ is one associated to a puncture, namely generated by a loop $P_i$ going around a puncture of $S_0$. Note that the same puncture defines many peripheral subgroups, each associated to the path $\gamma_i$ joining $P_i$ to the  base point used in the definition of the fundamental group $\pi_1(S)$. 
In particular, $\pi_1(S)$ acts on its peripheral subgroups by conjugation. 

Two peripheral subgroups are \emph{connected by the edge $e$} of the triangulation $S$ if they are respectively defined by loops $P_i$, $P_j$ and paths $\gamma_i$, $\gamma_j$ such that the path $\gamma_i\gamma_j^{-1}$ is homotopic to a path in this edge $e$ by a homotopy keeping path endpoints in  $P_i \cup P_j$.

\begin{prop}[\cite{BonLiu}]
\label{prop:BonLiu}
There is a one-to-one correspondence between isomorphism classes of irreducible representations $\rho\col \T_\tau^A \to \End(V)$ and sets of data consisting of:
\begin{enumerate}
\item[\upshape(1)] a spinned homomorphism $r_0 \col \pi_1(S_0) \to \PSL(\C)$, namely an element $r_0 \in \RS(S_0)$;
\item [\upshape(2)]for each peripheral subgroup $\pi\subset \pi_1(S)$, a line $\xi_\pi \subset \C^2$ such that 
\begin{enumerate}
\item [\upshape(a)] $\xi_\pi$ is respected by the subgroup $r_0(\pi)\subset \SL(\C)$;
\item [\upshape(b)] $\xi_{\gamma\pi\gamma^{-1}} = r_0(\gamma) (\xi_\pi)$ for every $\gamma\in \pi_1(S_0)$;
\item  [\upshape(c)]$\xi_\pi \not = \xi_{\pi'}$ when the parabolic subgroup $\pi$ and $\pi'$ are connected by an edge of the triangulation;
\end{enumerate}
\item[\upshape(3)]  for each puncture, a number $h_i\in \C$ whose power $h_i^N$ is determined by the following property: if $r_0\in \RS(S)$ is represented by $r_0'\col \pi_1(S) \to \SL(\C)$ and by a spin structure $\sigma\in \Spin(S)$, if $P_i\in \pi_1(S)$ is represented by a small loop going counterclockwise around the $i$--th puncture, and if $\pi_1$ is the peripheral subgroup generated by $P_i$, then $r_0'$ acts by multiplication by $(-1)^{\sigma(P_i)}h_i^N$ on the line $\xi_{\pi_i}\subset \C^2$. 
\end{enumerate}
\end{prop}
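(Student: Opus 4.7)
The plan is to exploit the fact that $\T_\tau^A$ is a twisted group algebra on the lattice $\mathcal W(\tau;\Z)\cong \Z^{6g+3p-6}$, i.e., a quantum torus, and therefore to apply the standard representation theory of quantum tori at roots of unity. The first step is to compute the center $Z(\T_\tau^A)$. From the commutation relation $\alpha\cdot\beta = A^{\omega(\alpha,\beta)}\beta\cdot\alpha$ (a consequence of antisymmetry of $\omega$), an element $\alpha \in \mathcal W(\tau;\Z)$ lies in the center exactly when $N \mid \omega(\alpha,\beta)$ for every $\beta$. Since $N$ is odd, the center is then additively spanned by $N\mathcal W(\tau;\Z)$ together with the radical of the Thurston form $\omega$. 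A direct combinatorial check shows that $\ker\omega$ is generated by the peripheral weight systems, one per puncture, so the spectrum of the center is parametrized by a pair consisting of a complex edge weight system $z\in \mathcal W(\tau;\C^*)$ and an $N$-th root $h_i\in \C^*$ for each puncture.

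The second step invokes the general fact for quantum tori at a primitive $N$-th root of unity: $\T_\tau^A$ is Azumaya over its center of constant rank $N^{2k}$, where $2k$ is the rank of $\omega$ on $\mathcal W(\tau;\Z)$ modulo its radical; consequently every irreducible representation of $\T_\tau^A$ has dimension $N^k = N^{3g+p-3}$ and, up to isomorphism, is uniquely determined by its central character. Combined with the previous step, this already yields a bijection between isomorphism classes of irreducible representations and pairs $(z, h_1,\dots,h_p)$; it remains to match this affine parametrization with the geometric data listed in the proposition.

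The third and crucial step is to translate the central data into the intrinsic geometric data. Complex edge weights $z$ are, after taking suitable logarithms, shear-bend coordinates for a pleated surface in $\HH^3$. Standard pleated-surface theory then reads off: (i) the holonomy representation, refined to a spinned homomorphism $r_0 \in \RS(S_0)$ via the compatible square roots implicit in the weight system; (ii) for each peripheral subgroup $\pi$, the pleating endpoint on $\partial \HH^3 = \PP^1(\C)$, which as a line $\xi_\pi \subset \C^2$ is fixed by $r_0(\pi)$, satisfies the equivariance (2b), and the distinctness condition (2c) (the latter being equivalent to the non-degeneracy of the shear coordinate across the corresponding edge). The numbers $h_i$ of part (3) are then precisely the $N$-th roots of the eigenvalue by which $r_0(P_i)$ acts on $\xi_{\pi_i}$, corrected by the spin-structure sign $(-1)^{\sigma(P_i)}$. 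This step inverts: starting from $(r_0,\{\xi_\pi\},\{h_i\})$, one reconstructs the shear-bend coordinates and hence the central character.

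The main obstacle lies in executing step three precisely, especially the bookkeeping that matches the spin-structure factor $(-1)^{\sigma(P_i)}$ in (3) with the choices of compatible square roots made when reading off $r_0$ and $\xi_\pi$ from the shear-bend parameters. One must verify that the resulting class in $\RS(S_0)$ is independent of these square root choices, while still inducing the prescribed action on each $\xi_{\pi_i}$. In \cite{BonLiu} this is handled via a careful analysis of shear-bend coordinates and their lifts to $\SL(\C)$, and I would follow that approach, with the non-degeneracy condition (2c) playing the role of the domain of definition for the shear coordinates and hence guaranteeing the bijectivity of the correspondence.
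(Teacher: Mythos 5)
Your proposal follows the same route the paper intends: classify the irreducible representations of the quantum torus $\T_\tau^A$ at a primitive odd $N$-th root of unity by their central characters (a complex edge-weight system together with an $N$-th root per puncture), then translate that central data into a spinned $\PSL(\C)$-homomorphism with invariant lines at the peripheral subgroups via shear-bend parameters for pleated surfaces, with condition (2c) encoding the non-degeneracy of the shear coordinates. This is exactly what the paper sketches and attributes to an extension of \cite{BonLiu} to the square-root setting.
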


Proposition~\ref{prop:BonLiu} is an easy extension of some arguments of \cite{BonLiu} to the square root context considered here. 
The harder part of \cite{BonLiu} is to prove that Proposition~\ref{prop:BonLiu} is well-behaved under change of the ideal triangulation $T$, but we will not need this property here.

\subsection{Constructing representations of the skein algebra} Let us repeat the statement we intend to prove.

We are restricting attention to the case where the surface is closed, so that we do not have to worry about puncture invariants; this is only for the sake of exposition, and the general case is very similar. We also assume that $A$ is a primitive $N$--root of unity with $N$ odd, a critical hypothesis at this point. We are given a spinned homomorphism $r\in \RS(S)$, and we want to construct an irreducible representation $\rho\col \pi_1(S) \to \End(V)$ with classical shadow $r$. 

Choose a triangulation $T$ for $S$. We require that the end points of each edge are distinct. Let  $v_1$, $v_2$, \dots, $v_p$ be the vertices of this triangulation, and consider the punctured surface $S_0=S-\{v_1, v_2, \dots, v_p\}$. The spinned homomorphism $r\in \RS(S)$ gives by restriction a spinned homomorphism $r_0 \in \RS(S_0)$.

To apply Proposition~\ref{prop:BonLiu} to construct a representation $\T_\tau^A \to \End (V)$, we need to choose suitable invariant lines $\xi_\pi \subset \C^2$ and numbers $h_i \in \C$. In our case, the restriction of $r_0$ to each peripheral subgroup is trivial. This makes the choice of the lines $\xi_\pi$ particularly easy, since Condition~(2a) of Proposition~\ref{prop:BonLiu} is automatically satisfied. In fact, there is a whole $2p$--dimensional family of possible choices for the lines $\xi_\pi$. Similarly, the data (3) is simplified by this property of $r_0$, and just means that $h_i$ is an $N$--root of unity. 

Having chosen lines $\xi_\pi$ and weights $h_i$ as above, Proposition~\ref{prop:BonLiu} provides a representation $\T_\tau^A \to \End (V)$. Combining it with the homomorphism $\SSS(S_0) \to  \T_\tau^A$ of Theorem~\ref{thm:QTraces}, we now have a representation $\rho_0\col \SSS(S_0) \to \End(V)$. This is beginning to look like the statement we are trying to prove for Theorem~\ref{thm:QTraces}, but many things need to be checked:

\begin{enumerate}
\item [\upshape(1)] Is the classical shadow of $\rho_0$ equal to $r_0$?
\item [\upshape(3)] Is $\rho_0$ irreducible?
\item [\upshape(3)] Does $\rho_0\col \SSS(S_0) \to \End(V)$ induce a representation $\rho\col \SSS(S) \to \End(V)$ of the skein algebra of $S$? Namely is it true that $\rho_0([K]) = \rho_0([K']) $  when the framed links $K$, $K' \subset S_0 \times [0,1]$ are isotopic in $S\times[0,1]$, by an isotopy that is allowed to cross the punctures? 
\item [\upshape(4)] Is $\rho$ independent of the choices that we have made, and in particular of  the lines $\xi_\pi \subset \C^2$ and the triangulation of $S$?
\end{enumerate}

These issues are then dealt with in the following way.
\begin{enumerate}

\item The answer to (1) is yes, by the occurrence of many ``miraculous cancelations'' in the computation of Chebyshev polynomials $T_N(\rho([K]))\in \End(V)$. 

\item The answer to (2) is a definite no. However, we can bypass this difficulty by restricting $\rho$ to an irreducible component.

\item After this irreducible reduction, the answer to (3) then remarkably becomes yes,  provided that we chose all $h_i=A^{-1}$ and that we restricted $\rho$ to an appropriate irreducible component.

\item It then turns our that the answer to (4) is also positive.

\end{enumerate}

Proving these statements completes the proof of Theorem~\ref{thm:MainThm}. Details, which are occasionally very elaborate,  will appear elsewhere \cite{BonWon2}.


\begin{thebibliography}{XX}

\bibitem{Barr} John W. Barrett, \emph{Skein spaces and spin structures}, Math. Proc. Cambridge Philos. Soc.  126  (1999), 267--275.

\bibitem{BonBook} Francis Bonahon, \emph{Low-dimensional geometry. From euclidean surfaces to hyperbolic knots},  Student Math. Library 49,  IAS/Park City Math.  Subseries,  American Math. Society and Institute for Advanced Study, 2009. 

\bibitem{BonLiu} Francis Bonahon, Xiaobo Liu, \emph{Representations
of
the quantum Teichm\"uller
space and invariants of
surface diffeomorphisms}, Geom. Topol. 11 (2007), 889--938. 

\bibitem{BonWon1} Francis Bonahon, Helen Wong, \emph{Quantum traces for representations of surface groups in $\SL$}, preprint, 2010, \texttt{arXiv:1003.5250}. 

\bibitem{BonWon2} Francis Bonahon, Helen Wong, in preparation. 

\bibitem{Bull1} Doug Bullock, \emph{Estimating a skein module with $\SL(\C)$ characters}, Proc. Amer. Math. Soc. 125 (1997), 1835--1839.

\bibitem{Bull2} Doug Bullock, \emph{Rings of $\SL(\C)$--characters and the Kauffman bracket skein module},  Comment. Math. Helv.  72  (1997),   521--542. 

\bibitem{BFK1} Doug Bullock, Charles Frohman, Joanna Kania-Bartoszy\'nska, \emph{Understanding the Kauffman bracket skein module},  J. Knot Theory Ramifications  8  (1999),  265--277. 

\bibitem{BFK2} Doug Bullock, Charles Frohman, Joanna Kania-Bartoszy\'nska, \emph{Topological interpretations of lattice gauge field theory},  Comm. Math. Phys.  198  (1998),  47--81. 

\bibitem{BFK3} Doug Bullock, Charles Frohman, Joanna Kania-Bartoszy\'nska, \emph{The Kauffman bracket skein as an algebra of observables},   Proc. Amer. Math. Soc.  130  (2002), 2479--2485.

\bibitem{BullPrz} Doug Bullock, J\'ozef H. Przytycki, \emph{Multiplicative structure of Kauffman bracket skein module quantizations}, Proc. Amer. Math. Soc.  128  (2000), 923-931. 

\bibitem{BruHil} Gregory W. Brumfield, H. Michael Hilden, \emph{${\rm SL}(2)$ representations of finitely presented groups}, 
Contemp.  Mathematics 187,  American Math.  Society, 1995.

\bibitem{CulS} Marc Culler, Peter B.  Shalen, \emph{Varieties of group representations and splittings of $3$--manifolds}, Annals of Math.  117 (1983), 109--146.

\bibitem{CheFoc1} Leonid O.
Chekhov, Vladimir V. Fock,
 \emph{Quantum Teichm\"uller spaces},
Theor.Math.Phys. 120 (1999) 1245--1259. 

\bibitem{CheFoc2} Leonid O.
Chekhov, Vladimir V. Fock,
 \emph{Observables in 3D gravity
and geodesic algebras}, in:
\emph{Quantum groups and
integrable systems} (Prague,
2000),  Czechoslovak J. Phys.  50 
(2000),   1201--1208.

\bibitem{ChePen1}  Leonid O.
Chekhov, Robert C. Penner, \emph{Introduction to Thurston's quantum theory},  Uspekhi Mat. Nauk  58  (2003), (354), 93--138. 

\bibitem{Foc} Vladimir V. Fock, 
 \emph{Dual Teichm\"uller
spaces},
unpublished preprint,
1997, \texttt{arXiv:Math/dg-ga/9702018} .

\bibitem{Frick} Robert Fricke, \emph{\"Uber die Theorie der automorphen Modulgrupper}, Nachr. Akad. Wiss. G\"ottingen (1896), 91--101.

\bibitem{FrickKlein} Robert Fricke, Felix Klein, \emph{Vorlesungen der Automorphen Funktionen}, Teubner, Vol. I , 1897; Vol. II, 1912.

\bibitem{FroGel} Charles Frohman, R\u azvan Gelca, \emph{Skein modules and the noncommutative torus},  Trans. Amer. Math. Soc.  352  (2000), 4877--4888.

\bibitem{GavKli} Alexander M. Gavrilik, Anatoliy Klimyk, \emph{$q$-deformed orthogonal and pseudo-orthogonal algebras and their representations}, 
Lett. Math. Phys. 21 (1991), 215--220. 

\bibitem{Gold1} William M. Goldman, \emph{Topological components of spaces of representations},   Invent. Math.  93  (1988), 557--607.



\bibitem{Gold2} William M. Goldman, \emph{Trace coordinates on Fricke spaces of some simple hyperbolic surfaces}, in:  \emph{Handbook of Teichm\"uller theory, Vol. II},  611--684, IRMA Lect. Math. Theor. Phys. 13, Eur. Math. Soc., 2009. 

\bibitem{HavKP} Miloslav Havl\'\i\v cek, Anatoliy U. Klimyk, Severin Po\v sta, \emph{Representations of the cyclically symmetric $q$-deformed algebra ${\rm so}_q(3)$},  J. Math. Phys.  40  (1999), 2135--2161.

\bibitem{HavPost} Miloslav Havl\'\i\v cek, Severin Po\v sta, \emph{On the classification of irreducible finite-dimensional representations of $\mathrm U_q'(\mathrm{so}_3)$ algebra}, J. Math. Physics 42 (2001), 472--500. 

\bibitem{Helling} Heinz Helling, \emph{Diskrete Untergruppen von $\SL({\R})$}, Invent. Math.  17  (1972), 217--229.

\bibitem{Hiatt} Chris Hiatt, \emph{Quantum traces in quantum Teichm\"uller theory}, \texttt{arXiv:0809.5118}, to appear in Algebraic and Geometric Topology. 

\bibitem{Kash} Rinat Kashaev, 
\emph{Quantization of
Teichm\"uller spaces and the
quantum dilogarithm},  Lett. Math.
Phys.  43  (1998),  105--115.

\bibitem{Kass} Christian Kassel, 
\emph{Quantum groups}, Graduate
Texts in Mathematics vol. 155,
Springer-Verlag, New York, 1995. 

\bibitem{Kau} Louis  H. Kauffman, \emph{ State Models and the Jones Polynomial},  Topology 26 (1987) 395-407.

\bibitem{Lick} W.B. Raymond Lickorish, \emph{An introduction to knot theory}, Graduate Texts in Math.  175, Springer-Verlag, 1997.

\bibitem{Liu1} Xiaobo Liu,
\emph{The quantum Teichm\"uller space as a noncommutative algebraic object}  J. Knot Theory Ramifications  18  (2009), 705--726.

\bibitem{Luo} Feng Luo, \emph{Characters of $\mathrm{SL}(2,K)$ representations of groups}, J. Differential Geometry 53 (1999), 575--626. 

\bibitem{Mum} David Mumford, John Fogarty, Frances Kirwan, \emph{Geometric invariant theory. Third edition},  Ergebnisse der Mathematik und ihrer Grenzgebiete  34, Springer-Verlag, 1994. 

\bibitem{Pal} Sarah Palin, \emph{Going rogue}, Harper Collins, 2009. 

\bibitem{Prz} J\'ozef H. Przytycki, \emph{Skein modules of $3$-manifolds},  Bull. Polish Acad. Sci. Math.  39  (1991),  91--100. 

\bibitem{PrzS} J\'ozef H. Przytycki, Adam S. Sikora, \emph{On skein algebras and $\SL(\C)$-character varieties},
Topology 39 (2000), 115--148. 

\bibitem{Vogt}  Henri G.  Vogt, \emph{Sur les invariants fondamentaux des \'equations diff\'erentielles lin\'eaires 
du second ordre}, Ann. \'Ecole Norm. Sup.  6 (1889), 3--72. 

\bibitem{Tur1} Vladimir G. Turaev,  \emph{The Conway and Kauffman modules of a solid torus}, Zap. Nauchn. Sem. Leningrad. Otdel. Mat. Inst. Steklov. (LOMI) 167 (1988), 79--89.


\bibitem{Tur2} Vladimir G. Turaev, \emph{Skein quantization of Poisson algebras of loops on surfaces},  Ann. Sci. \'Ecole Norm. Sup.   24  (1991), 635--704.

\end{thebibliography}
\end{document}